\documentclass{amsart}

\usepackage{amsfonts,amsmath,amsxtra,amsthm,tikz}
\usepackage{tikz-cd}
\usetikzlibrary{arrows} 
\usepackage{mathrsfs}
\usepackage{times}
\usepackage{anysize}
\usepackage{graphicx}
\usepackage{youngtab}
\usepackage[utf8]{inputenc}
\usepackage{color}
\usepackage{hyperref}
\usepackage{subfigure}
\usepackage{stmaryrd, MnSymbol}

\keywords{Hilbert scheme of points, Representation theory}

\marginsize{1in}{1in}{1in}{1in}

\newtheorem{lemma}{Lemma}[section]
\newtheorem{theorem}[lemma]{Theorem}
\newtheorem{conjecture}[lemma]{Conjecture}
\newtheorem{corollary}[lemma]{Corollary}
\newtheorem{proposition}[lemma]{Proposition}
\theoremstyle{definition}   
\newtheorem{example}[lemma]{Example}
\newtheorem{remark}[lemma]{Remark}
\newtheorem{definition}[lemma]{Definition}
\DeclareMathOperator{\spann}{span}
\DeclareMathOperator{\Hilb}{Hilb}
\DeclareMathOperator{\pt}{\mathrm{pt}}
\DeclareMathOperator{\Weyl}{\mathrm{Weyl}}
\DeclareMathOperator{\id}{\mathrm{id}}
\DeclareMathOperator{\Var}{\mathrm{Var}}
\newcommand{\tila}{\tilde{a}}
\newcommand{\tilb}{\tilde{b}}
\newcommand{\tilA}{\tilde{A}}
\newcommand{\tilB}{\tilde{B}}
\newcommand{\tilk}{\tilde{k}}
\DeclareMathOperator{\im}{\mathrm{Im}}

\newcommand{\Ext}{\mathrm{Ext}}
\newcommand{\cO}{\mathcal{O}}
\newcommand{\cI}{\mathcal{I}}
\newcommand{\cN}{\mathcal{cN}}


\newcommand{\Cn}{C^{[n]}}
\newcommand{\cCn}{\mathcal{C}^{[n]}}
\newcommand{\cCfn}{\mathcal{C}^{[n,n+1]}}
\newcommand{\pr}{\prime}
\newcommand{\cX}{\mathcal{X}}
\newcommand{\cC}{\mathcal{C}}
\renewcommand{\hom}{\mathrm{Hom}}
\newcommand{\Sym}{\mathrm{Sym}}

\newcommand{\C}{{\mathbb C}}
\newcommand{\R}{{\mathbb R}}

\newcommand{\A}{\mathbb{A}}
\newcommand{\Z}{{\mathbb Z}}

\newcommand{\Q}{{\mathbb Q}}

\renewcommand{\P}{\mathbb{P}}

\newcommand{\omitt}[1]{}  

\definecolor{Red}{rgb}{0.6,0,0.1}





\author{Oscar Kivinen} 
\address{Department of Mathematics, University of California, Davis.
One Shields Avenue, 95616 Davis, CA}
\email{okivinen@ucdavis.edu}

\title{Homology of Hilbert schemes of reducible locally planar curves}
\date{\today}

\begin{document}
\begin{abstract}
Let $C$ be a complex, reduced, locally planar curve. We extend the results of Rennemo \cite{R} to reducible curves by 
constructing an algebra $A$ acting on $V=\bigoplus_{n\geq 0} H_*(\Cn, \Q)$, where $\Cn$ is the Hilbert scheme of $n$ points on $C$. 
If $m$ is the number of irreducible components of $C$, we realize $A$ as a subalgebra of the Weyl algebra of $\A^{2m}$.
We also compute the representation $V$ in the simplest reducible example of a node. 
\end{abstract}

\maketitle
\tableofcontents


\section{Introduction}\label{sec:introduction}
Let $C$ be a complex, reduced, locally planar curve. We are interested in 
studying the homologies of the Hilbert schemes of points $C^{[n]}$. In the 
case when $C$ is integral, work of Rennemo, Migliorini-Shende, Maulik-Yun 
\cite{R,MS,MY} relates these homologies to the homology of the compactified 
Jacobian of $C$ equipped with the perverse filtration. Furthermore, work of 
Migliorini-Shende-Viviani \cite{MSV} considers an extension of these 
results to reduced but possibly reducible curves. 

Following Rennemo, we approach the problem of computing the homologies of 
the Hilbert schemes in question from the point of view of representation 
theory. In \cite{R}, a Weyl algebra in two variables acting on $V:=
\bigoplus_{n\geq 0} H_*(C^{[n]})$ was constructed for integral locally 
planar curves, and $V$ was described in terms of the representation theory 
of the Weyl algebra. When $C$ has $m$ irreducible components, we construct 
an algebra $A$ acting on $V$, where $A$ is an explicit subalgebra of the 
Weyl algebra in $2m$ variables. The main result is the following.
\begin{theorem}\label{thm:mainthm}
If $C=\bigcup_{i=1}^mC_i$ is a decomposition of $C$ into irreducible
components, the space $V=\bigoplus_{n\geq 0}H_*(C^{[n]},\Q)$ carries a bigraded action of the algebra $A=A_m:=\Q[x_1,\ldots, x_m, \partial_{y_1},\ldots, \partial_{y_m}, \sum_{i=1}^m y_i, \sum_{i=1}^m \partial_{x_i}]$, where $V=\bigoplus_{n,d\geq 0} V_{n,d}$ is graded by number of points $n$ and homological degree $d$. Moreover, the operators $x_i$ have degree $(1,0)$ and the operators $\partial_{y_i}$ have degree $(-1,-2)$ in this bigrading. In effect, the operator $\sum y_i$ has degree $(1,2)$ and the operator $\sum \partial_{x_i}$ has degree $(-1,0)$.
\end{theorem}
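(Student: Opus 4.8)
The plan is to realize each generator as a correspondence on nested Hilbert schemes and then verify the defining relations of $A_m$ by intersection theory, reducing every global computation to a local one near the singular points of $C$. To this end I would introduce the nested Hilbert scheme $C^{[n,n+1]}=\{(Z,Z'):Z\subset Z',\ \ell(Z)=n,\ \ell(Z')=n+1\}$ with its projections $p\colon C^{[n,n+1]}\to\Cn$ and $q\colon C^{[n,n+1]}\to C^{[n+1]}$ and the support map $\rho\colon C^{[n,n+1]}\to C$ recording the point where $Z'/Z$ is concentrated. Because $C$ is reduced and locally planar, these nested schemes are local complete intersections of pure dimension $n+1$, so that $p$ has relative dimension one, $q$ is generically finite, and the refined Gysin pullbacks $p^{*}$ and $q^{!}$ are available; away from the singular points these facts reduce to the integral situation of \cite{R}.

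Next I define the operators. Adding a point anywhere is the correspondence $q_{*}p^{*}$, which raises $n$ by one and raises homological degree by $2$; this is $\sum_i y_i$. Its transpose $p_{*}q^{!}$ lowers $n$ by one and preserves homological degree, and this is $\sum_i\partial_{x_i}$. For the component-indexed operators I would use the cohomology classes $\eta_i\in H^2(C)$ characterized by $\langle\eta_i,[C_j]\rangle=\delta_{ij}$, and set $x_i:=q_{*}\big(\rho^{*}\eta_i\cap p^{*}(-)\big)$ and $\partial_{y_i}:=p_{*}\big(\rho^{*}\eta_i\cap q^{!}(-)\big)$. Capping with the degree-$2$ class $\rho^{*}\eta_i$ lowers homological degree by $2$, so $x_i$ has bidegree $(1,0)$ and $\partial_{y_i}$ has bidegree $(-1,-2)$, which together with the two sums gives the stated degrees for all four generators. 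A structural point I would emphasize here is the asymmetry of these definitions: the naive component-wise versions of $\sum_i y_i$ and $\sum_i\partial_{x_i}$ would be defined by pushing forward from the loci $\rho^{-1}(C_i)$, but these cycles are glued along the preimages of the nodes and only their sum $[C^{[n,n+1]}]$ descends to a well-defined operation on homology, whereas $\rho^{*}\eta_i$ is defined for each $i$ separately because pullback of cohomology presents no difficulty. This is exactly why $A_m$ contains each individual $x_i$ and $\partial_{y_i}$ but only the sums $\sum_i y_i$ and $\sum_i\partial_{x_i}$.

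With the operators defined, the theorem reduces to verifying the relations that cut out $A_m$ inside the Weyl algebra of $\A^{2m}$: the commutations $[x_i,x_j]=[\partial_{y_i},\partial_{y_j}]=[x_i,\partial_{y_j}]=0$, the mixed vanishings $[x_i,\sum_j y_j]=[\partial_{y_i},\sum_j\partial_{x_j}]=[\sum_i y_i,\sum_j\partial_{x_j}]=0$, and the two Heisenberg relations $[\sum_j\partial_{x_j},x_i]=1$ and $[\partial_{y_i},\sum_j y_j]=1$. The commuting and mixed-vanishing relations I would obtain from base change and the projection formula: each such commutator is the class of a fibre product of nested schemes that is symmetric under interchanging the two correspondences, and two cappings by pulled-back cohomology classes commute, so the two orders of composition agree.

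The Heisenberg relations are where I expect the real work to lie. To compute $[\sum_j\partial_{x_j},x_i]$ I would write both composites as cycle classes on the two fibre products of $C^{[n,n+1]}$ with itself, over $C^{[n+1]}$ and over $C^{[n-1]}$, and compare them; their difference is concentrated on the locus where the added and removed points collide and is governed by an excess-intersection term there. Local planarity lets me compute this term in a formal neighbourhood of a single point of $C$, so the entire comparison is local, and away from the singular points it is identical to Rennemo's in the integral case, yielding the diagonal of $\Cn$ with coefficient equal to the integral of $\eta_i$ over the relevant fibre. Since $\sum_j\partial_{x_j}$ is component-blind, this fibre represents the full fundamental class $[C]=\sum_j[C_j]$, so $\langle\eta_i,[C]\rangle=\sum_j\delta_{ij}=1$, no cross terms survive, and the commutator is exactly the identity. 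The genuinely new content is to show that the contributions of the nodes, where two branches meet and the fibres of $p$ and $q$ degenerate, cancel, so that the local model at a node contributes nothing beyond what local planarity already predicts; this is the step I expect to require the most care. The relation $[\partial_{y_i},\sum_j y_j]=1$ follows by the same computation with the roles of $p$ and $q$ exchanged. Once all relations hold, $V$ is a module over the algebra presented by these generators subject to these relations, which is precisely $A_m$, and the proof is complete.
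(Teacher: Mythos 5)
Your overall architecture---realize the generators as correspondences through $C^{[n,n+1]}$ and verify the Weyl-type relations---matches the paper's, but your component-indexed operators are built differently: the paper defines $x_i$ as the proper pushforward along the closed embedding $\iota_{c_i}\colon C^{[n]}\to C^{[n+1]}$ adding a \emph{fixed} smooth point $c_i\in C_i^{sm}$, and $\partial_{y_i}=\iota_{c_i}^!$ as the Gysin map of this regular embedding (Lemmas \ref{lemma-homotopic} and \ref{regularembedding}), whereas you weight the full nested correspondence by a tautological class $\rho^*\eta_i$. Your version is in the Nakajima spirit and would plausibly yield operators satisfying the same relations (the vanishing $\eta_i\cdot\eta_j=0$ in $H^4(C)=0$ even gives the trivial commutators for free), but it is strictly heavier: every one of your generators now requires a refined Gysin map on a singular nested Hilbert scheme, while the paper needs such machinery only for $\mu_\pm$.

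That is where the genuine gap lies. Asserting that $C^{[n,n+1]}$ is lci of pure dimension $n+1$ does not produce the maps $p^*$ and $q^!$: the morphisms $p$ and $q$ between these singular spaces are neither flat nor lci morphisms, so there is no refined Gysin pullback to invoke without additional input. The paper's entire Section \ref{sec:geometry} exists to supply that input: one embeds $C$ in a versal family $\cC\to B$, proves that the total spaces $\cC^{[n]}$ and $\cC^{[n,n+1]}$ are smooth (Propositions \ref{prop:relhilbsm} and \ref{prop:relfhilbsm}), takes the bivariant orientation class $[\tilde p]\in H^*(\cC^{[n,n+1]}\to\cC^{[n]})$ given by the fundamental class, and restricts it to the central fiber to define $p^!$. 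Your proposal has no substitute for this step, and "away from the singular points it reduces to \cite{R}" does not help, since \cite{R} itself relies on the same versal-family device and the singular points are exactly where the construction is at stake. The second gap is that you explicitly defer the computation of the excess/diagonal term at the singular points, which is the actual content of the Heisenberg relations: the paper resolves it with Lemma \ref{lemma-fundamentalclass}, showing that the fiber product $X_i=\cC^{[n,n+1]}\times_{\cC^{[n+1]}}\cC^{[n]}$ has exactly two components, isomorphic to $\cC^{[n-1,n]}$ and $\cC^{[n]}$, meeting in codimension one (via \cite[Lemma 3.4]{R}) so that $[X_i]=(f_i)_*[\cC^{[n-1,n]}]+(g_i)_*[\cC^{[n]}]$ with multiplicity one; working in the smooth family is again what makes this decomposition, and hence the coefficient $1$, come out uniformly over the singularities. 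Without an argument at this point your claim that "the contributions of the nodes cancel" is an announcement of the theorem rather than a proof of it.
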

\begin{remark}
The algebra $A$ does not depend on $C$, but only on the number of components $m$.
\end{remark}
\begin{remark}
    An argument similar to \cite[Theorem 1.2]{R} shows that $V$ is free over
    $\Q[x_i]$ for any $i=1,\ldots,m$, and also over $\Q[\sum_{i=1}^my_i]$. Through the ORS conjectures (see below), this may be seen as a version of Rasmussen's remark in 
    \cite{RasDiff} that the triply-graded homology of the link $L$ of $C$ is free over the homology of an unlink corresponding to a component of $L$.
\end{remark}

In Section \ref{sec:geometry} we will discuss the relevant geometry, namely the deformation theory of locally planar curves. In particular, we prove that the relative families of (flag) Hilbert schemes have smooth total spaces, which is crucial for applying a bivariant homology formalism, described in Section \ref{sec:bivariant}.

We then define the action of the generators of $A$ on $V$ by explicit geometric constructions in Section \ref{sec:definitions}, and prove the commutation relations in Section \ref{sec:commrel}. 

In Section \ref{sec:node}, we describe the representation $V$ of the algebra $A_2$ in the example of the node.
More precisely, we have 
\begin{theorem}\label{thm:node}
When $C=\{x^2=y^2\}\subset \P^2$, we have that 
\begin{equation}
V\cong \frac{\Q[x_1,x_2,y_1,y_2]}{\C[x_1,x_2,y_1+y_2](x_1-x_2)}
\end{equation}
as an $A$-module, where $$A=\Q[x_1,x_2,\partial_{x_1}+\partial_{x_2},y_1+y_2,\partial_{y_1},\partial_{y_2}]\subset \Weyl(\A^{4}).$$
\end{theorem}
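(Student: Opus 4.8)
The plan is to exhibit an explicit $A$-linear map $\Phi\colon R/N\to V$, where $R=\Q[x_1,x_2,y_1,y_2]$ and $N=\C[x_1,x_2,y_1+y_2]\cdot(x_1-x_2)$ is the submodule of $R$ generated by $x_1-x_2$, and then to prove that $\Phi$ is an isomorphism by matching bigraded dimensions. The first routine point is that $N$ is preserved by the six generators of $A$: multiplication by $x_1,x_2,y_1+y_2$ keeps the factor $(x_1-x_2)$; the operator $\partial_{x_1}+\partial_{x_2}$ annihilates $x_1-x_2$, so by Leibniz it sends $(x_1-x_2)f$ to $(x_1-x_2)(\partial_{x_1}+\partial_{x_2})f$; and $\partial_{y_1},\partial_{y_2}$ send $(x_1-x_2)f$ to $(x_1-x_2)\partial_{y_i}f$ with $\partial_{y_i}f\in\C[x_1,x_2,y_1+y_2]$. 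Hence $R/N$ is a bona fide $A$-module and the statement is well posed.

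Next I would give the monomials of $R$ their geometric meaning. Writing $C=C_1\cup C_2$, the locus of subschemes consisting of $a$ points on $C_1$ and $b=n-a$ points on $C_2$ has closure $\overline{\Sym^a C_1\times\Sym^b C_2}\subset C^{[n]}$, a top-dimensional cycle, and I send $y_1^a y_2^b$ to its class; the vacuum $\mathbf 1\in H_0(C^{[0]})$ spans $H_0$, and $x_i$ acts by adding a point on $C_i$. Two families of relations of $R/N$ must then be checked on these classes. The relation $(x_1-x_2)\mathbf 1=0$ is precisely the identity $x_1\cdot\mathbf 1=x_2\cdot\mathbf 1$ in $H_0(C^{[1]})=H_0(C)$, i.e. that a point of $C_1$ and a point of $C_2$ are homologous, which holds because $C$ is connected. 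The relations $\partial_{y_1}(y_1-y_2)^k=k(y_1-y_2)^{k-1}$ and $\partial_{y_2}(y_1-y_2)^k=-k(y_1-y_2)^{k-1}$ must be matched against the action of $\partial_{y_i}$ (defined in Section \ref{sec:definitions} by the correspondence forgetting a point on $C_i$) on the alternating classes $\xi_k:=\sum_{j=0}^{k}\binom{k}{j}(-1)^{k-j}\,[\,\overline{\Sym^{j}C_1\times\Sym^{k-j}C_2}\,]\in H_{2k}(C^{[k]})$; here the identity $j\binom{k}{j}=k\binom{k-1}{j-1}$ produces exactly the factor $k$. Granting this, $\Phi$ is a well-defined map of $A$-modules with $\Phi\big((y_1-y_2)^k\big)=\xi_k$, and it preserves the bigrading since $x_i,y_i$ have bidegrees $(1,0),(1,2)$.

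To prove $\Phi$ is an isomorphism I would compute the bigraded Betti numbers of $C^{[n]}$ and verify that $\sum_{n,d}\dim H_d(C^{[n]})\,q^n t^d=\frac{1-q+q^2t^2}{(1-q)^2(1-qt^2)^2}$, the Hilbert series of $R/N$. For the explicit curve $\{x^2=y^2\}$ this is accessible via the $\C^\ast$-action on $\P^2$ scaling the coordinate transverse to the two lines, whose fixed points on $C$ are the node $p$ and the two points at infinity: the induced paving of $C^{[n]}$ into affine pieces attached to these three points exhibits $H_*(C^{[n]})$ as freely spanned by fundamental classes of cell closures, all in even degree, and counting cells by dimension yields the series. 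Equivalently one stratifies $C^{[n]}$ by the numbers of points on $C_1\setminus p$ and $C_2\setminus p$ together with the length and local type of the subscheme at $p$ — each stratum a product of symmetric powers of $\A^1$ with the punctual Hilbert schemes of the germ $\{xy=0\}$ — to get the class of $C^{[n]}$ in the Grothendieck ring, and invokes purity to upgrade to honest Betti numbers; or one feeds the genus-zero, one-node curve into the compactified-Jacobian description of \cite{MSV}. Since the cycle classes above span $H_*(C^{[n]})$ by the paving, $\Phi$ is surjective, and the dimension match then forces it to be an isomorphism.

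The real obstacle is conceptual and lies in the injectivity $\ker\Phi=N$. Because $A$ contains only the symmetric combination $y_1+y_2$ and never the individual $y_i$, one checks that $A\cdot\mathbf 1=\Q[x_1,x_2,y_1+y_2]\cdot\mathbf 1$ is a \emph{proper} submodule of $V$: the alternating classes $\xi_k$, which live in the $y_1-y_2$ direction, cannot be produced from the vacuum by any element of $A$. Thus $V$ is not cyclic; it requires the entire tower $\{\xi_k\}_{k\ge 0}$ as $A$-module generators, and the subtlety is to prove both that these classes are genuinely present in $H_*(C^{[n]})$ — it is the reducibility of $C$ that creates them — and that the only relation binding them is the single vacuum relation $(x_1-x_2)\mathbf 1=0$. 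I expect the freeness of $V$ over $\Q[x_1]$ and over $\Q[y_1+y_2]$ recorded after Theorem \ref{thm:mainthm} to be the decisive tool here: it forbids any relation among the $x_i$- and $(y_1+y_2)$-translates of the generators, thereby reducing the injectivity of $\Phi$ to the Betti-number count of the previous step.
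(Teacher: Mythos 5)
Your overall architecture is the same as the paper's: send monomials in $y_1,y_2$ to fundamental classes of the irreducible components $M_{n,k}=\overline{\Sym^kC_1\times\Sym^{n-k}C_2}$, obtain a surjection from $\Q[x_1,x_2,y_1,y_2]$, observe that the kernel contains $\Q[x_1,x_2,y_1+y_2](x_1-x_2)$ because $(x_1-x_2)\cdot\mathbf 1=0$ in $H_0(C^{[1]})$, and close the argument by matching Poincar\'e series (which does make injectivity automatic, so your final paragraph's appeal to freeness over $\Q[x_1]$ is unnecessary once the count is done). However, there are two genuine gaps and one computational error. The main gap is surjectivity: the assertion that the classes $x_1^ax_2^b[M_{n,k}]$ span $H_*(C^{[n]})$ is exactly Theorem \ref{thm:fundclasses}, and it does not follow from the existence of an affine paving. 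A paving gives a basis by closures of cells, but those cell closures are not visibly $x$-translates of the component classes; the paper has to dualize to cohomology, compute $H^*(M_{n,k})$ via the blow-up description $M_{n,k}\cong\mathrm{Bl}(\P^k\times\P^{n-k})$, track the gluing of the components along the $E^n_{k,k+1}$ through Mayer--Vietoris (Lemma \ref{gluing}), and then show by an explicit calculation with the exceptional classes $\zeta_{n,k}$ that $\bigcap_i\ker x_i^*=0$. This is where the real work of the theorem lives, and your proposal skips it.

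The second gap is that you never identify the action of $\mu_+$ with multiplication by $y_1+y_2$. This is needed both for $A$-linearity of $\Phi$ and to conclude that all of $N=\C[x_1,x_2,y_1+y_2](x_1-x_2)$ --- not just the vacuum relation --- maps to zero. It is not a formality: $\mu_+$ is defined by a correspondence through $C^{[n,n+1]}$, and a priori $\mu_+[M_{n,k}]$ could be any combination of the $[M_{n+1,j}]$. The paper pins it down by an algebraic trick, applying the already-proved commutation relations $[d_1^{i+1},\mu_+]=(i+1)d_1^i$ and $[d_2^{j+1},\mu_+]=(j+1)d_2^j$ to the class corresponding to $y_1^iy_2^j/i!j!$ to force $\mu_+=\,\cdot\,(y_1+y_2)$ on the top row, and then propagates this below the diagonal using $[x_i,\mu_+]=0$; a similar argument handles $\mu_-$. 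Finally, your normalization is off: with $\Phi(y_1^jy_2^{k-j})=[M_{k,j}]$ and the geometric fact $d_1[M_{k,j}]=[M_{k-1,j-1}]$ (no multiplicity), compatibility with $\partial_{y_1}$ fails by a factor of $j$; your binomial identity $j\binom{k}{j}=k\binom{k-1}{j-1}$ produces $k\binom{k-1}{j-1}$ only after absorbing that $j$, which is precisely why the paper sets $[M_{i+j,i}]\leftrightarrow y_1^iy_2^j/i!j!$. With that divided-power normalization your alternating classes $\xi_k$ and the rest of the argument go through.
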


\begin{remark}
 Although seeing the algebra $A$ for the first time immediately raises the question whether we can define the operators $\partial_{x_i}$ or multiplication by $y_i$ separately, i.e. extend this action to the whole Weyl algebra, this example shows that it is in fact not possible to do this while retaining the module structure for $V$.     
\end{remark}

Locally planar curve singularities are connected naturally to topics ranging from the Hitchin fibration \cite{Ngo} to HOMFLY-PT homology of the links of the singularities \cite{ORS,GORS}.  For example, 
from \cite{ORS} we have
\begin{conjecture}
If $C$ has a unique singularity at $0$, its {\em link} is by definition the intersection of $C$ with a small three-sphere around $0$. There is an isomorphism 
$$V_0^c \cong HHH_{a=0}(\text{Link of } C),$$ where $V_0^c=\bigoplus_{n\geq 0} H^*(C_0^{[n]})$ is the cohomology of the punctual Hilbert scheme, and $HHH(-)$ is the triply graded HOMFLY-PT homology of Khovanov and Rozansky
\cite{KR}.
\end{conjecture}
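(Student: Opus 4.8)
Since the final statement is the Oblomkov–Rasmussen–Shende conjecture, a complete proof lies beyond current technology; what I can describe is the strategy by which the algebra $A$ built in this paper is meant to give access to it, together with the step where the real difficulty sits.

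The first step is to locate $V_0^c$ inside the $A$-module $V$. The operator $x_i$, of degree $(1,0)$, adds a reduced free point on the $i$-th branch away from $0$, so the punctual Hilbert scheme is precisely the locus from which no free point can be stripped off any branch. I therefore expect an isomorphism of bigraded vector spaces $V_0^c \cong V/(x_1 V + \cdots + x_m V)$, after dualizing $H_*$ to $H^*$ using the smoothness of the total spaces of the relative (flag) Hilbert families established in Section \ref{sec:geometry}. I would prove this by running the specialization sequence attached to the stratification of $\Cn$ by the support of the length-$n$ subscheme and identifying the contribution of the non-punctual strata with the image of the $x_i$; this transports the $(n,d)$-bigrading of $V$ to the grading of $V_0^c$ by number of points $n$ (the prospective $q$-grading) and cohomological degree $d$ (the prospective $t$-grading). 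Compatibility with the paper's freeness remark is a useful sanity check: freeness of $V$ over each $\Q[x_i]$ gives $V\cong \Q[x_i]\otimes (V/x_iV)$, so quotienting by all the $x_i$ really does isolate the genuinely punctual classes.

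With $V_0^c$ so identified, the two sides already agree after collapsing the homological grading: setting $t=-1$ turns $V_0^c$ into the generating series of Euler characteristics $\sum_n q^{2n}\chi(C_0^{[n]})$, which by the Oblomkov–Shende conjecture (now a theorem, and the decategorified shadow of \cite{ORS}) equals the $a=0$ specialization of the HOMFLY polynomial of the link. The content of the statement is thus the homological refinement: that the $t$-grading by cohomological degree on $V_0^c$ matches the homological $t$-grading of $HHH_{a=0}$ term by term, not merely in Euler characteristic. Here the $A$-action is the organizing tool, since the operators $\sum y_i$ (degree $(1,2)$) and $\partial_{y_i}$ (degree $(-1,-2)$) move homological degree by exactly $2$ per added point, which is the slope dictated by the ORS dictionary; the bigraded module structure should then rigidify the $t$-weights once the Euler characteristics are known.

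The main obstacle is that HOMFLY–PT homology is defined through Khovanov–Rozansky matrix factorizations and Soergel-bimodule complexes \cite{KR}, and there is at present no functor identifying that construction with the cohomology of punctual Hilbert schemes; all one has unconditionally is the coincidence of Poincaré series in the cases where both are computable. Supplying such an identification — via sheaf-theoretic models of HOMFLY homology on (flag) Hilbert schemes, or via an extension of the perverse/weight-filtration comparison of \cite{MY} to the reducible curves treated in \cite{MSV} — is the crux, and it is exactly the point at which the reducibility handled here, through the extra branches the components $C_i$ contribute to the link, makes the problem genuinely harder than the integral case. As a first concrete test I would verify the statement for the node, where Theorem \ref{thm:node} gives $V$ explicitly and a short computation yields $V_0^c\cong V/(x_1,x_2)V\cong \Q[y_1,y_2]$, since $x_1-x_2$ already lies in $(x_1,x_2)$; matching its bigraded Poincaré series against the known $a=0$ HOMFLY homology of the Hopf link both checks the conjecture in the simplest reducible case and fixes the correct grading conventions before the general argument is attempted.
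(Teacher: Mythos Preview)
The statement is labeled a \emph{Conjecture}, and the paper does not attempt to prove it: the very next sentence reads ``This conjecture is still wide open.'' So there is no proof in the paper to compare against. You correctly recognize this and instead sketch a strategy; the difficulty is that the first step of that strategy is already wrong.

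Your proposed identification $V_0^c \cong V/(x_1V+\cdots+x_mV)$ fails for the node. Your computation $V/(x_1,x_2)V\cong\Q[y_1,y_2]$ is correct, and its Poincar\'e series is $(1-qt^2)^{-2}$. But the paper records the Poincar\'e series of the actual punctual Hilbert scheme as
\[
P_{V_0^c}(q,t)=1+\sum_{m\ge1}q^m\bigl(1+(m-1)t^2\bigr)=\frac{1-q+q^2t^2}{(1-q)^2},
\]
which is entirely different (already at $n=1$: the punctual Hilbert scheme is a single reduced point contributing $1$, whereas $\Q[y_1,y_2]$ contributes $2t^2$). The heuristic ``no free point can be stripped off any branch'' does not translate into ``not in the image of the $x_i$'': the operator $x_i$ adds a point at one \emph{fixed} smooth location $c_i$, so lying outside $\sum_i x_iV$ is far weaker than being supported at the singular point. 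Indeed the product formula $P_V=P_{V_0^c}\cdot(1-qt^2)^{-2}$ from Section~\ref{sec:node} shows that isolating $V_0^c$ from $V$ would require dividing out two independent operators of bidegree $(1,2)$, while $A$ supplies only one such operator, $\mu_+=\sum_iy_i$. Thus the $A$-module structure alone does not cut out $V_0^c$ in the way you propose, and the ``first concrete test'' you suggest at the end would expose this mismatch rather than confirm anything.
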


This conjecture is still wide open. Recently, advances on the knot homology side have been made by Hogancamp, Elias and Mellit \cite{EH,Hog,Mel}, who compute the HOMFLY-PT homologies of for example $(n,n)$-torus links using algebraic techniques.
As the $(n,n)$-torus links appear as the links of the curves $C=\{x^n=y^n\}\subset \P^2$, a
partial motivation for this work was to study the Hilbert schemes of points on these curves. 

\begin{remark}
There are many natural algebras acting on $HHH(-)$, for example the positive half of the Witt algebra as proven in \cite{KR2}. It might be possible that the actions of the operators $\mu_+=\sum_i \partial_{x_i}$ and $x_i$ on $V$ are related to this action.

In the case where $C=\{x^p=y^q\}$ for coprime $p$ and $q$ there is an action of the spherical rational Cherednik algebra of $SL_n$ with parameter $c=p/q$ on the cohomology of the compactified Jacobian of $C$ \cite{VV,OY1}, or rather its associated graded with respect to the perverse filtration, which is intimately related to the space $V$. For arbitrary torus links, it might still be true that $V$ or its variants carry some form of an action of a rational Cherednik algebra.

\end{remark}

\subsection*{Acknowledgements}

I would like to thank my advisor Eugene Gorsky for his keen interest in this work and his continuing support, as well as for suggesting to study homological correspondences on Hilbert schemes.
In addition, I thank Eric Babson, Brian Osserman, Vivek Shende and Zhiwei Yun for helpful discussions. 
This work was partially funded by the Vilho, Yrjö and Kalle Väisälä Foundation of the Finnish Academy of Science and Letters, a 
Fulbright Graduate Grant, and NSF grants DMS-1700814, DMS-15593381.

\section{Geometry of Hilbert schemes of points}\label{sec:geometry}
We describe the general setup for this paper.
Fix $C/\C$ a locally planar reduced curve and let $C=\bigcup_{i=1}^mC_i$ be a decomposition of $C$ to irreducible components. We will be working with 
versal deformations of $C$.

\begin{definition}
If $X$ is a projective scheme, a {\em versal deformation} of $X$ is a 
map of germs $\pi: \cX\to B$ such that $B$ is smooth, $\pi^{-1}(0)=X$, and given $\pi': 
\cX'\to B'$ with $\pi'^{-1}(b')=X$ there exists $\phi: B'\to B$ such that $b'
\mapsto 0$ and $\pi'$ is the pullback of $\pi$ along $\phi$. If $T_0B$ 
coincides with the first-order deformations of $X$, or in other words the 
base $B$ is of minimal dimension, we call $\pi$ a {\em miniversal} deformation.
\end{definition}

We call a family of locally planar reduced complex algebraic curves over a smooth base 
$B$ {\em locally versal} at $b\in B$ if the induced deformations of the germs of the singular 
points of $\pi^{-1}(b)$ are versal. We are interested in smoothness of relative families 
of Hilbert schemes of points for such deformations, needed 
for example for Lemma \ref{lemma-fundamentalclass}.

\begin{definition}
If $\cX\to B$ is any family of projective schemes, and $P(t)$ is any Hilbert polynomial, we denote the {\em relative Hilbert scheme} of this family by $\cX^{P(t)}$. By definition, Hilbert schemes are defined for families \cite{Groth}, and we note here that at closed points $b\in B$ the fibers of the relative Hilbert scheme are exactly $\Hilb^{P(t)}(\cX_b)$.
\end{definition}
We now consider the tangent spaces to (relative) Hilbert schemes.
\begin{lemma}\label{lem:normalbundle}
For any projective scheme $X$ and a flag of subschemes $X_1\subset\cdots\subset X_k$ in $X$ with fixed Hilbert polynomials $P_1(t),\ldots, P_k(t)$, the Zariski tangent space is given by
$$T_{(X_1,\ldots,X_n)}\Hilb^{\overrightarrow{P(t)}}(X)\cong H^0(X,\mathcal{N}_{(X_1,\ldots, X_m)/X}),$$
where the sections of the {\em normal sheaf}
$\mathcal{N}_{(X_1,\ldots, X_m)/X}\subseteq \bigoplus_{i=1}^k \mathcal{N}_{X_i/X}$ are tuples $(\xi_1,\ldots, \xi_k)$ of normal vector fields such that $\xi_i|_{X_j}=\xi_j$ modulo $\cN_{X_j/X_i}$ whenever $X_i\supseteq X_j$. The normal sheaf is by definition the sheaf of germs of commutative diagrams of homomorphisms of $\cO_X$-modules of the form 
\begin{equation}\begin{tikzcd}
\cI_k\arrow[d, "\sigma_k"] &\subset&\cI_{k-1}\arrow[d, "\sigma_{k-1}"]&\subset&\cdots&\subset&\cI_1\arrow[d, "\sigma_1"]\\
\cO_{X_k}&\rightarrow&\cO_{X_{k-1}}&\rightarrow&\cdots&\rightarrow&\cO_{X_1}
\end{tikzcd}
\end{equation}
\end{lemma}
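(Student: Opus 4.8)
The plan is to compute the tangent space via the functor of points, using the standard identification of $\C$-tangent vectors with morphisms from the dual numbers. Recall that the flag Hilbert scheme $\Hilb^{\overrightarrow{P(t)}}(X)$ represents the functor assigning to a test scheme $T$ the set of flags $\mathcal{Z}_1\subset\cdots\subset\mathcal{Z}_k\subset X\times T$ of closed subschemes, each flat over $T$ with the prescribed Hilbert polynomials $P_i(t)$. Consequently a tangent vector at the point $(X_1,\ldots,X_k)$ is the same as a $T$-point with $T=\mathrm{Spec}\,\C[\epsilon]/(\epsilon^2)$ whose restriction to the closed point recovers the given flag; equivalently, a flag of ideal sheaves $\widetilde{\mathcal{I}}_k\subset\cdots\subset\widetilde{\mathcal{I}}_1$ inside $\cO_X\otimes_\C\C[\epsilon]/(\epsilon^2)$, each flat over $\C[\epsilon]/(\epsilon^2)$ and reducing modulo $\epsilon$ to $\mathcal{I}_i=\mathcal{I}_{X_i/X}$.

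First I would treat a single term of the flag. The classical computation (Grothendieck) shows that flat first-order deformations $\widetilde{\mathcal{I}}_i$ of $\mathcal{I}_i$ are in bijection with $\hom_{\cO_X}(\mathcal{I}_i,\cO_{X_i})=H^0(X,\mathcal{N}_{X_i/X})$: explicitly, a homomorphism $\sigma_i\colon\mathcal{I}_i\to\cO_{X_i}$ determines the deformed ideal by $\widetilde{\mathcal{I}}_i=\{\,f+\epsilon\,\widetilde{\sigma_i(f)}:f\in\mathcal{I}_i\,\}$, where $\widetilde{(-)}$ denotes a local lift to $\cO_X$, and flatness over the dual numbers is equivalent to $\widetilde{\mathcal{I}}_i$ having exactly this form. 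Since this correspondence is local and canonical it sheafifies, and taking the product over $i$ identifies the unconstrained deformations of the tuple with $H^0(X,\bigoplus_i\mathcal{N}_{X_i/X})$.

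Next I would impose the flag condition. The requirement that $\widetilde{\mathcal{I}}_i\subset\widetilde{\mathcal{I}}_{i-1}$ persist after deformation, reflecting $\widetilde{X}_i\supseteq\widetilde{X}_{i-1}$ whenever $X_i\supseteq X_{i-1}$, translates, upon writing sections in the form above and reducing modulo $\epsilon^2$, into the commutativity of each square with top inclusion $\mathcal{I}_i\hookrightarrow\mathcal{I}_{i-1}$, vertical maps $\sigma_i,\sigma_{i-1}$, and bottom restriction $\cO_{X_i}\to\cO_{X_{i-1}}$. This is precisely the defining condition of the normal sheaf $\mathcal{N}_{(X_1,\ldots,X_k)/X}$ as the sheaf of such commutative diagrams, equivalently the compatibility $\sigma_i|_{X_{i-1}}=\sigma_{i-1}$ modulo $\mathcal{N}_{X_{i-1}/X_i}$ recorded in the statement. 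Taking global sections then yields the claimed isomorphism.

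The main obstacle I anticipate is the careful verification that the deformed flatness together with the preservation of the flag inclusions is exactly equivalent — not merely implied by — the commutativity of the diagrams, and that this equivalence is compatible with localization so that it may be checked stalk-by-stalk and then glued. Because $X$ is allowed to be singular (locally planar, hence in general non-smooth), I would be careful to use $\hom_{\cO_X}(\mathcal{I}_i,\cO_{X_i})$ throughout rather than the conormal description $\mathcal{H}om(\mathcal{I}_i/\mathcal{I}_i^2,\cO_{X_i})$, which agrees with the normal sheaf only when the ambient space is smooth. A secondary point needing attention is the bookkeeping of the two opposite partial orders — inclusions of subschemes versus inclusions of ideals — so that the indices in the commutative squares are arranged consistently with the diagram in the statement.
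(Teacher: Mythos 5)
Your argument is correct and follows essentially the same route as the paper: the standard identification of tangent vectors with $\C[\epsilon]/(\epsilon^2)$-points, the classical bijection between flat first-order deformations of an ideal sheaf and $\hom_{\cO_X}(\cI_i,\cO_{X_i})$, and the observation that preserving the flag inclusions is exactly the commutativity of the squares --- the paper itself only carries out the case $k=1$ and delegates the flag case to Sernesi, Proposition 4.5.3, so your writeup is an accurate unpacking of that citation. One minor inaccuracy that does not affect the proof: $\hom_{\cO_X}(\cI_i,\cO_{X_i})$ and $\hom_{\cO_{X_i}}(\cI_i/\cI_i^2,\cO_{X_i})$ are canonically isomorphic for \emph{any} closed subscheme (every $\cO_X$-homomorphism $\cI_i\to\cO_{X_i}$ kills $\cI_i^2$), so the caution about the conormal description is unnecessary; what can fail in the singular case is only the identification of the normal sheaf with a locally free normal bundle.
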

\begin{proof}
Note that from first-order deformation theory it immediately follows that if $k=1$ we 
have $T_{X_1}\Hilb^{P(t)}(X)\cong H^0(\cN_{X_1/X},X)=H^0(X,\hom_{\cO_{X_1}}(\cI_1/
\cI_1^2, \cO_{X_1}))$, where $\cI_1$ is the ideal sheaf of $X_1$. For the proof 
of the result for flag Hilbert schemes we refer to \cite[Proposition 4.5.3]{Ser}.
\end{proof}

The following proposition is proved in e.g.
\cite[Proposition 17]{S}, and we reprove it here for convenience of the 
reader.
\begin{proposition}\label{prop:relhilbsm}
Let $\pi: \cC\to B'$ be a versal deformation of $C$, a reduced locally 
planar curve. Then 
the total space of the family $\pi^{[n]}:\cCn\to B'$ is smooth.
\end{proposition}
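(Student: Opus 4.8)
The plan is to show that the total space $\cCn$ is smooth by computing the dimension of its Zariski tangent space at an arbitrary closed point and checking that it equals the dimension of $\cCn$ as a variety, which will force smoothness. A point of $\cCn$ is a pair $(b, Z)$ where $b \in B'$ and $Z \subset \cC_b = \pi^{-1}(b)$ is a length-$n$ subscheme. The tangent space $T_{(b,Z)}\cCn$ fits into an exact sequence relating the tangent directions that move $b$ in the base $B'$ to those that deform $Z$ inside the fiber; concretely, there is a map $T_{(b,Z)}\cCn \to T_b B'$ whose kernel is the tangent space to the fiber Hilbert scheme $T_Z \Hilb^n(\cC_b) \cong H^0(\cC_b, \cN_{Z/\cC_b})$ by Lemma \ref{lem:normalbundle} with $k=1$.

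First I would reduce to a purely local question at the singular points of the fiber. Away from the singularities of $\cC_b$, the curve is smooth, the relative Hilbert scheme of a smooth family of curves is itself smooth (it is a fibration in symmetric products), and so all the obstruction is concentrated at the finitely many planar singular points through which $Z$ is supported. Since $\pi$ is a versal deformation, the induced deformation of each singular germ is versal, i.e.\ the Kodaira--Spencer map from $T_b B'$ to the first-order deformations $T^1$ of the germ is surjective. The key local input, which goes back to the deformation theory of planar curve singularities, is that for a planar singularity the relative Hilbert scheme over the \emph{miniversal} base is already smooth; one establishes this by showing the forgetful map to the versal base is flat with smooth (in fact irreducible of the expected dimension) fibers, using that a length-$n$ subscheme of a reduced planar curve deforms flatly as the singularity smooths.

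The core computation is therefore the surjectivity of the differential $d\pi^{[n]}: T_{(b,Z)}\cCn \to T_b B'$, or equivalently the vanishing of the relevant obstruction space. I would argue that the composition $T_{(b,Z)}\cCn \to T_b B' \to T^1(\text{germ})$, followed by the local-to-global analysis, shows that every first-order deformation of the base lifts to a deformation of the pair $(b,Z)$: given a direction in $T_bB'$, versality provides a deformation of the singular germ, and since $Z$ is a finite-length subscheme of a \emph{planar} curve (embedded dimension two) there is no obstruction to extending $Z$ along that deformation. Adding the fiber directions $H^0(\cC_b,\cN_{Z/\cC_b})$ then gives $\dim T_{(b,Z)}\cCn = \dim B' + n$, matching $\dim \cCn$, so every point is smooth.

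The main obstacle I anticipate is making the local lifting argument rigorous: one must show that the obstruction to smoothing $Z$ together with the curve lives in a group that vanishes precisely because the ambient germ is \emph{planar} (so that $Z$ is always a Cartier-type or at least unobstructed subscheme), and that versality of $\pi$ exactly supplies the base directions needed to kill the obstruction coming from the node/singularity. This is where the hypotheses ``locally planar'' and ``versal'' are both essential, and I would expect the cleanest route to be a comparison with the smoothness of the relative Hilbert scheme over the miniversal deformation of the germ, as developed by Shende and others, rather than a bare-hands tangent-space dimension count at the singular fibers.
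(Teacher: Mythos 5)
Your overall skeleton coincides with the paper's: run the tangent--obstruction sequence
$$0\to H^0(\mathcal{N}_{Z/\cC_b})\to T_{(b,Z)}\cCn\to T_bB'\to \mathrm{Obs},$$
kill the obstruction term so that $\dim T_{(b,Z)}\cCn=n+\dim B'$, match this against the dimension of the total space (for which the paper quotes \cite[Proposition 3.5]{MY} to know $\cCn$ is of pure dimension $n+\dim B'$ --- an input you use implicitly but never secure), and finally transfer smoothness between versal families via the product decomposition $\cC\cong\overline{\cC}\times(\C^t,0)$. However, one step of your proposed route is simply false: the forgetful map $\cCn\to B'$ is \emph{not} ``flat with smooth (in fact irreducible of the expected dimension) fibers.'' Its fiber over $b$ is $\Hilb^n(\cC_b)$, which is singular whenever $\cC_b$ is (already $C^{[2]}$ of a node is a chain of three surfaces meeting along lines), and for reducible fibers it is not even irreducible --- Proposition \ref{prop-comp} of this paper exhibits $\binom{n+m-1}{n}$ components. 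Only the total space is smooth; smoothness cannot be propagated fiberwise from the base.

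The step you flag as ``the main obstacle'' is the entire content of the proposition, and your appeal to versality to resolve it misplaces where versality is actually used. Versality of $\pi$ only gives surjectivity of $T_bB'$ onto the $T^1$ of each singular germ, which has dimension the Tjurina number and is in general far too small to lift an arbitrary base direction past a length-$n$ subscheme; so ``versality provides the deformation and planarity kills the obstruction'' does not close the argument for an arbitrary versal family. What the paper does instead is first work with one specific auxiliary family: a smooth $B\subset\C[x,y]$ containing the local equation of $C$ \emph{and all polynomials of degree at most $n$}. For that family the obstruction term $\Ext^1_{\cO_{\cC_b}}(\cI_Z,\cO_Z)$ is shown to vanish by applying $\hom(-,\cO_Z)$ to $0\to\cI_Z\to\cO_{\cC_b}\to\cO_Z\to 0$ and using that $\Ext^1_{\cO_{\cC_b}}(\cO_{\cC_b},\cO_Z)=H^1(\cO_Z)=0$ together with the vanishing of the next term (equivalently, and more transparently, the largeness of $B$ makes the evaluation $B\to\C[x,y]/I_Z$ surjective, so every base direction lifts --- this is exactly how the paper argues the flag case in Proposition \ref{prop:relfhilbsm}). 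Only \emph{then} is versality invoked, at the very end, to write any versal $\cC$ as $\overline{\cC}\times(\C^t,0)$ and transport smoothness. You would need to supply this two-stage structure --- a concrete large family where the lifting is manifest, plus the versality transfer --- rather than defer the crux to \cite{S}.
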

\begin{proof}

Let $B\subset \C[x,y]$ be a finite dimensional smooth family of polynomials 
containing the local equation for $C$ and all polynomials of degree at most $n$, such that the associated deformation is versal. Consider
the family of curves over $B$ given by $\cC_B:=\{(f\in B, p\in \C^2) 
f(p)=0\}$. Denote the fiber over $f$ by $C_f$ and let $Z\subset C_f$ be a subscheme of length $n$.
By e.g.
\cite[Section 4]{Ser}, there is always an exact sequence

$$0\to H^0(\cN_{Z/C_f},Z)\to T_ZC_B^{[n]}\to T_f B \to \Ext^1_{\cO_{C_f}}(\cI_Z,\cO_Z)$$

For squarefree $f$, there is always some open neighborhood $U$ of $f$ such that 
$C_U^{[n]}$ is reduced of pure dimension $n+\dim B$ \cite[Proposition 3.5]{MY}. Since $B$ is smooth and $H^0(\cN_{Z/C_f},Z)$ has dimension $n$ (see e.g. \cite{C}), it is enough to prove that the last $\Ext$-group vanishes to get smoothness of the total space $C_U^{[n]}$ at $Z$.

Now from the short exact sequence $$0\to \cI_Z\to \cO_{C_f}\to \cO_Z\to 0$$ taking $\hom$ to $\cO_Z$ we have 
$$\cdots \to \Ext^1_{\cO_{C_f}}(\cO_{C_f},\cO_Z)\to \Ext^1_{\cO_{C_f}}(\cI_Z,\cO_Z) \to \Ext^2_{\cO_{C_f}}(\cO_Z,\cO_Z)=0\to \cdots$$
As $\Ext^1_{\cO_{C_f}}(\cO_{C_f},\cO_Z)\cong H^1(\cO_Z,C_f)=0$ and the sequence is exact, we must have $\Ext^1_{\cO_{C_f}}(\cI_Z,\cO_Z)=0$ as well. So the total space is smooth.

Now if $\overline{\cC}\to \overline{B}$ is the miniversal deformation, 
by versality there are compatible isomorphisms $\cC\cong \overline{\cC}\times (\C^t,0)$ and $B\cong \overline{B}\times (\C^t,0)$ for some $t$, see e.g. \cite{GLS}. Hence we have smoothness for any versal family.

\end{proof}

We now consider the relative flag Hilbert scheme of a versal deformation. 
If $S$ is a smooth complex algebraic surface, its nested Hilbert scheme of points $S^{[n,n+1]}$ is smooth by 
results of 
\cite{C,T}. 
\begin{remark}
This nested Hilbert scheme $S^{[n,n+1]}$, together with the ordinary Hilbert scheme of $n$ points $S^{[n]}$ are the only flag Hilbert schemes of points on $S$ that are smooth, as shown in \cite{C,T}.
\end{remark}

We also have the following result.

\begin{proposition}\label{prop:relfhilbsm}
The total space of the relative family $\cCfn\to B$ is smooth.
\end{proposition}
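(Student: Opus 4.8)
The plan is to mirror the proof of Proposition \ref{prop:relhilbsm}, but to replace the Hilbert scheme of the ambient surface by its nested analogue, whose smoothness we already have in hand. Working locally, I embed the versal family as before: choose $B\subset\C[x,y]$ a finite-dimensional smooth family of polynomials that contains the local equation of $C$, induces a versal deformation, and in addition contains all polynomials of degree at most $n+1$. Set $\cC_B=\{(f,p):f(p)=0\}\subset B\times\A^2$ with fibers $C_f$, so that $\cCfn$ is (a germ of) the relative nested Hilbert scheme $\cC_B^{[n,n+1]}$, whose closed points are triples $(f,Z\subset Z')$ with $Z,Z'\subset C_f$ of lengths $n$ and $n+1$. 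The key device is the forgetful map $\Phi:\cC_B^{[n,n+1]}\to(\A^2)^{[n,n+1]}$, $(f,Z\subset Z')\mapsto(Z\subset Z')$, whose target is smooth of dimension $2(n+1)$ by \cite{C,T}.

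Because $Z\subset Z'$, the two incidence conditions $Z\subset C_f$ and $Z'\subset C_f$ collapse to the single condition $Z'\subset C_f$, i.e. $f\in H^0(\cI_{Z'})$. Hence the fiber of $\Phi$ over $(Z\subset Z')$ is the kernel of the linear evaluation map $\mathrm{ev}_{Z'}:B\to H^0(\cO_{Z'})\cong\C^{n+1}$. The point to verify is that $\mathrm{ev}_{Z'}$ is surjective for every length-$(n+1)$ subscheme $Z'\subset\A^2$ that occurs: this follows from Castelnuovo–Mumford regularity, since a length-$(n+1)$ subscheme of the plane is $(n+1)$-regular, so polynomials of degree at most $n+1$ already surject onto $\cO_{Z'}$, and these lie in $B$ by construction. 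Globally $\mathrm{ev}$ is a surjection of vector bundles $B\otimes\cO_{(\A^2)^{[n,n+1]}}\to\mathcal{E}$, where $\mathcal{E}$ is the pullback along $(Z\subset Z')\mapsto Z'$ of the tautological rank-$(n+1)$ bundle on $(\A^2)^{[n+1]}$. Its kernel is a subbundle of constant rank $\dim B-(n+1)$, and $\cC_B^{[n,n+1]}$ is precisely its total space. As the total space of a vector bundle over a smooth base, it is smooth of dimension $2(n+1)+(\dim B-(n+1))=\dim B+n+1$, which is the expected dimension.

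Finally I reduce to the miniversal family exactly as at the end of Proposition \ref{prop:relhilbsm}: versality supplies compatible identifications $B\cong\overline{B}\times(\C^t,0)$ trivializing the extra deformation directions, so smoothness of $\cC_B^{[n,n+1]}$ descends to the family over the miniversal base, and hence to an arbitrary versal $B$. The main obstacle is establishing the surjectivity of $\mathrm{ev}_{Z'}$ uniformly in $(Z\subset Z')$ — this constant-rank statement is what makes $\Phi$ a genuine bundle rather than a mere surjection — together with checking that the set-theoretic description of $\cC_B^{[n,n+1]}$ as this kernel bundle agrees with its scheme structure as a relative Hilbert scheme; everything else is bundle and germ bookkeeping. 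One could alternatively argue via Lemma \ref{lem:normalbundle}, exhibiting a tangent–obstruction sequence $0\to H^0(\mathcal{N}_{(Z,Z')/C_f})\to T_{(f,Z,Z')}\cC_B^{[n,n+1]}\to T_fB\to E$ and showing the total tangent dimension equals $\dim B+n+1$ on the reduced, pure-dimensional total space; but there the excess of $H^0(\mathcal{N}_{(Z,Z')/C_f})$ over $n+1$ at singular points of $C_f$ must be absorbed by a compensating drop in the rank of the map to $T_fB$, which makes the affine-bundle argument the cleaner route.
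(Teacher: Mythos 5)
Your proof is correct and follows essentially the same route as the paper: both work inside $B\times(\A^2)^{[n,n+1]}$ with the same enlarged base $B$, and both reduce smoothness to the surjectivity of the evaluation map $B\to H^0(\cO_{Z'})$, which holds precisely because $B$ contains all polynomials of degree at most $n+1$. The only difference is one of packaging --- the paper runs the infinitesimal version of your argument, namely an exact sequence of tangent spaces whose last map is the derivative of your $\mathrm{ev}_{Z'}$, yielding tangent dimension $n+1+\dim B$ at every point --- whereas your global kernel-bundle formulation makes smoothness immediate from smoothness of $(\A^2)^{[n,n+1]}$, without the implicit need to check that the total space is pure of the expected dimension.
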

\begin{proof}
This is a local question, so we can assume that $C$ is the germ of a plane curve singularity in $\C^2$.
    
From Lemma \ref{lem:normalbundle}, we have that
at $(J\subset I)\in \cCfn$ the tangent space is $\ker(\phi-\psi)$, where
\begin{align*}
\phi:\; &\hom_{\C[x,y]}(I,\C[x,y]/I)\to \hom_{\C[x,y]}(J,\C[x,y]/I), \;\;
\text{and}\\ 
\psi:\; &\hom_{\C[x,y]}(J,\C[x,y]/J)\to \hom_{\C[x,y]}(J, \C[x,y]/I)
\end{align*}
are the induced maps given by restriction and further quotient. 
Here $\phi-\psi$ is the difference of the maps from the direct sum. This is precisely the requirement needed for the normal vector fields in question.

Suppose again that $B\subset \C[x,y]$ is a finite dimensional smooth family of polynomials 
containing the local equation for $C$ and all polynomials of degree at most $n+1$, such that the associated deformation is versal.

Consider the inclusion $\cC^{[n,n+1]}_B\hookrightarrow B\times (\C^2)^{[n,n+1]}$. We have an exact sequence 
$$0\to T_{f,J\subset I}\cC_{B}^{[n,n+1]}\to 
T_fB\times T_{J\subset I}(\C^2)^{[n,n+1]}\to 
\C[x,y]/J$$ where the last map is given by $\left(f+\epsilon g,\begin{pmatrix}
\eta_1\\ \eta_2
\end{pmatrix}\right)\mapsto (\phi\eta_1)(f)-g$ mod $J$. By the assumption on $B$
the last map is surjective, hence $T_{f,J\subset I}\cC_B^{[n,n+1]}$ has dimension $\dim (\C^2)^{[n,n+1]}+\dim B-n+1=n+1+\dim B$, as expected, and the total space is smooth.

Again, if $\overline{\cC}\to \overline{B}$ is the miniversal deformation, 
by versality there are compatible isomorphisms $\cC\cong \overline{\cC}\times (\C^t,0)$ and $B\cong \overline{B}\times (\C^t,0)$ for some $t$, see e.g. \cite{GLS}. Hence we have smoothness for any versal family.

\end{proof}

Another result we will also need is the description of the components and dimensions of the irreducible components of $C^{[n]}$.

\begin{proposition}\label{prop-comp}
For any locally planar reduced curve $C=\bigcup_{i=1}^mC_i$, the irreducible 
components of $C^{[n]}$ are given by $$\overline{(C_1^{sm})^{[r_1]}\times
\cdots\times (C_m^{sm})^{[r_m]}}, \; \; \; \sum_i r_i=n.$$ Here $C^{sm}_i$ denotes the smooth locus of $C_i$.
In particular, there are $\binom{n+m-1}{n}$ irreducible components of $C^{[n]}$, all of dimension $n$.
\end{proposition}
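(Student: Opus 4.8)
The plan is to reduce everything to the smooth locus and then control the ``boundary'' where subschemes meet the finitely many singular points of $C$ by a dimension count. Write $C^{sm}$ for the smooth locus of $C$; since a point is smooth on $C$ exactly when it lies on a single $C_i$ and is a smooth point of that component, $C^{sm}=\bigsqcup_i U_i$ with $U_i:=C_i^{sm}\setminus\bigcup_{j\neq i}C_j$ open and dense in $C_i$ (we delete only finitely many points). First I would record that the locus $(C^{sm})^{[n]}\subset C^{[n]}$ of subschemes whose support avoids the finite set $\mathrm{Sing}(C)$ is open, and that, because the Hilbert scheme of a disjoint union splits according to how the total length is distributed over connected components,
\[
(C^{sm})^{[n]}\;\cong\;\bigsqcup_{r_1+\cdots+r_m=n}\;\prod_{i=1}^m U_i^{[r_i]}.
\]
Each $U_i^{[r_i]}\cong \Sym^{r_i}(U_i)$ is smooth and irreducible of dimension $r_i$ (Hilbert scheme equals symmetric product for a smooth curve), so every factor $\prod_i U_i^{[r_i]}$ is smooth irreducible of dimension $n$, and these are exactly the irreducible components of $(C^{sm})^{[n]}$. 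Their number is that of weak compositions of $n$ into $m$ parts, namely $\binom{n+m-1}{n}$. Since $U_i$ is dense in $C_i^{sm}$, the closure in $C^{[n]}$ satisfies $\overline{\prod_i U_i^{[r_i]}}=\overline{\prod_i(C_i^{sm})^{[r_i]}}$, matching the claimed form.

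It then remains to show that $(C^{sm})^{[n]}$ is dense in $C^{[n]}$; granting this, a routine topological argument concludes. Indeed, if $U\subseteq X$ is a dense open subset that is a disjoint union of irreducible pieces $A_\alpha$, then the $\overline{A_\alpha}$ are precisely the irreducible components of $X$: each is irreducible, they cover $X$ by density, and any irreducible closed $Z\supseteq\overline{A_\alpha}$ meets $U$ in an irreducible open set lying in a single $A_\beta$, forcing $Z\subseteq\overline{A_\alpha}$. Applying this with $X=C^{[n]}$ and $A_\alpha=\prod_iU_i^{[r_i]}$ identifies the components, their count $\binom{n+m-1}{n}$, and their common dimension $n$.

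For density I would argue by dimension. By \cite[Proposition 3.5]{MY}, $C^{[n]}$ has pure dimension $n$, so every nonempty open subset has dimension $n$; hence it suffices to show that the closed complement $\partial:=C^{[n]}\setminus(C^{sm})^{[n]}$, the locus of subschemes whose support meets $\mathrm{Sing}(C)=\{p_1,\ldots,p_s\}$, has dimension $<n$, which forces $(C^{sm})^{[n]}$ to be dense. To bound $\dim\partial$, I stratify by the lengths $\ell_j\geq 0$ of $Z$ at each $p_j$ together with the residual length $b=n-\sum_j\ell_j$ supported on $C^{sm}$. The corresponding stratum fibers over a length-$b$ subscheme of $C^{sm}$ (contributing dimension $b$) and, at each $p_j$, a point of the punctual Hilbert scheme $C_{p_j}^{[\ell_j]}$, so it has dimension $b+\sum_j\dim C_{p_j}^{[\ell_j]}$.

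The hard part, and the only genuine input, is the estimate $\dim C_p^{[\ell]}\leq \ell-1$ for $\ell\geq 1$ at a singular point $p$ of a reduced planar curve; this is classical for plane curve singularities and is already visible for the node, where $C_p^{[2]}\cong\P^1$. Combining it with $\dim C_p^{[0]}=0$, and using that on $\partial$ at least one $\ell_j\geq 1$, every stratum has dimension at most $b+\sum_j\ell_j-\#\{j:\ell_j\geq1\}\leq n-1$. As $\partial$ is a finite union of such strata, $\dim\partial\leq n-1<n$, which yields density and completes the argument.
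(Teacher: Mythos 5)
Your argument is correct and follows the same skeleton as the paper's proof: decompose $(C^{sm})^{[n]}$ into the disjoint products indexed by weak compositions, observe each is smooth, connected, $n$-dimensional, and take closures inside the dense open smooth locus. The only difference is that where the paper simply cites \cite[Fact 2.4]{MRV} for the density of $(C^{sm})^{[n]}$ in $C^{[n]}$, you prove it via the classical Altman--Iarrobino--Kleiman-style dimension count, using purity of $\dim C^{[n]}=n$ together with the bound $\dim C_p^{[\ell]}\leq \ell-1$ for punctual Hilbert schemes of reduced planar singularities; both of these inputs are standard (cf.\ \cite{AIK,BGS,MY}), so this is a legitimate, more self-contained substitute for the citation rather than a different method.
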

\begin{proof}
That $(C^{sm})^{[n]}$ is dense in $C^{[n]}$ can be found e.g. in \cite[Fact 2.4]{MRV}. The schemes $(C_1^{sm})^{[r_1]}\times\cdots\times 
(C_m^{sm})^{[r_m]}$ are disjoint. They are of dimension $n$, smooth and 
connected, so irreducible, and as $(r_1,\ldots, r_m)$ runs over all 
possibilities, cover $(C^{sm})^{[n]}$. Taking closures we get the result.
\end{proof}

\section{Definition of the algebra A}\label{sec:definitions}
Let $V=\bigoplus_{i\geq 0} H_*(C^{[n]}, \Q)$, where we take singular 
homology in the analytic topology. This is mostly for simplicity, a majority of 
the results work with $\Z$ coefficients. A notable exception is Section \ref{sec:node}, where $\Q$-coefficients are essential. From now on we will be suppressing 
the coefficients from our notation. $V$ is naturally a bigraded $\Q$-vector 
space, graded by the number of points $n$ and homological degree $d$. We 
denote by $V_{n,d}$ the $(n,d)$-graded piece of $V$. We define the 
following operators on $V$, following ideas of Rennemo \cite{R} (and that 
originally go back to Nakajima and Grojnowski \cite{Nak,Groj}).

\begin{definition}\label{def:ops}
\hfill\\
\begin{enumerate}
\item Let $c_i\in C_i^{sm}$ be fixed smooth points and $\iota_i: C^{[n]}\to C^{[n
+1]}$ be the 
maps $Z\mapsto Z\cup c_i$. Let $x_i: V\to V$ be the operators given by $
(\iota_i)_*$. These are homogeneous of degree $(1,0)$ and only depend on the component the points $c_i$ lie in, see Lemma \ref{lemma-homotopic} below.

\item Let $d_i: V\to V$ be the operators given by the Gysin/intersection pullback map $(\iota_i)^!$. These are 
homogeneous of degree $(-1,-2)$, and well defined since the $\iota_i$ are 
regular embeddings. See Lemma \ref{regularembedding} below for a proof of 
this latter fact.
\end{enumerate}
\end{definition}

\begin{lemma}\label{lemma-homotopic}
The maps $\iota_{c_i}$ and $\iota_{c_i'}$ are homotopic whenever $c_i, c_i'\in C_i^{sm}$. In particular the corresponding pushforwards induce the same operators $x_i$ on $V$.
\end{lemma}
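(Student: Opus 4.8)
The plan is to realize the two closed embeddings $\iota_{c_i},\iota_{c_i'}:\Cn\to C^{[n+1]}$ as the two ends of a single family obtained by letting the added point travel inside $C_i^{sm}$, and then to invoke homotopy invariance of singular homology.

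First I would record how $\iota_c$ depends on the point $c$. Because $c$ is a smooth point of $C$, its ideal sheaf $\cI_c$ is locally principal, so the subscheme $Z_c$ with ideal sheaf $\cI_Z\cdot\cI_c$ always has length exactly $n+1$: away from $c$ nothing changes, while at $c$ the length rises by exactly one since $\cI_c$ is invertible there. This is the precise meaning of ``adding the point $c$'', and the construction is visibly algebraic in $c$. I would then package these maps into a single morphism
$$\Phi:\ \Cn\times C_i^{sm}\longrightarrow C^{[n+1]},\qquad (Z,c)\longmapsto Z_c,$$
whose restriction $\Phi(-,c)$ recovers $\iota_c$ for every $c\in C_i^{sm}$.

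Next, since $C_i$ is irreducible, its smooth locus $C_i^{sm}$ is a connected smooth complex curve, hence path-connected in the analytic topology. Choosing a continuous path $\gamma:[0,1]\to C_i^{sm}$ with $\gamma(0)=c_i$ and $\gamma(1)=c_i'$, the composite
$$H:=\Phi\circ(\id\times\gamma):\ \Cn\times[0,1]\longrightarrow C^{[n+1]}$$
is a continuous homotopy from $\iota_{c_i}$ to $\iota_{c_i'}$. Because singular homology in the analytic topology is a homotopy functor, homotopic maps induce equal pushforwards, whence $(\iota_{c_i})_*=(\iota_{c_i'})_*$ on $V$; that is, both define the same operator $x_i$.

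The crux is the first step: checking that $\Phi$ is a genuine morphism landing in $C^{[n+1]}$, rather than in some larger-length locus. This is exactly where the smoothness of $c$ is indispensable, as at a singular point $\cI_c$ fails to be Cartier, the length jump can exceed one, and both well-definedness and the uniform length bound break down. Once $\Phi$ is in hand, the path-connectedness of $C_i^{sm}$ and homotopy invariance of $H_*$ are routine.
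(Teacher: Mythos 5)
Your proposal is correct and follows essentially the same route as the paper: the paper's proof is exactly to choose a path $c_i(t)$ from $c_i$ to $c_i'$ and use the homotopy $(Z,t)\mapsto Z\cup c_i(t)$, followed by homotopy invariance of singular homology. Your additional verifications (that adding a smooth point is a well-defined algebraic family raising the length by exactly one, and that $C_i^{sm}$ is path-connected because $C_i$ is irreducible) are details the paper leaves implicit, and they are accurate.
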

\begin{proof}
Take any path $c_i(t)$ from $c_i$ to $c_i'$
and consider the homotopy $C^{[n]}\times [0,1]\to C^{[n+1]}$ given  by 
$(Z,t)\mapsto Z\cup c_i(t)$.
\end{proof}

\begin{lemma}\label{regularembedding}
The map $\iota_x$ is a regular embedding.
\end{lemma}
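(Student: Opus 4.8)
The plan is to show that the map $\iota_x \colon C^{[n]} \to C^{[n+1]}$ given by $Z \mapsto Z \cup x$, for a fixed smooth point $x \in C_i^{sm}$, is a regular embedding; equivalently, that it is a closed immersion whose ideal sheaf is locally generated by a regular sequence, or (since everything will live inside smooth ambient spaces) that the induced map on normal bundles has the right rank so that the embedding is locally cut out by the expected number of equations. My first step is to realize $\iota_x$ as sitting inside a smooth ambient family. Since $x$ is a smooth point of $C$, the question is entirely local near $x$, and I may replace $C$ by the germ of a smooth curve (or a locally planar curve away from its singularities) at $x$; in particular I can use that near $x$ the Hilbert schemes $C^{[n]}$, $C^{[n+1]}$ and the nested Hilbert scheme $C^{[n,n+1]}$ are all smooth, by Proposition \ref{prop:relfhilbsm} and the smoothness results cited for $S^{[n]}$ and $S^{[n,n+1]}$.

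The key idea is to factor $\iota_x$ through the nested Hilbert scheme. There is a tautological diagram in which $C^{[n,n+1]}$ maps to $C^{[n]} \times C^{[n+1]}$ by remembering both subschemes, and the operation of adding the fixed point $x$ is obtained by intersecting with the locus where the ``added point'' is exactly $x$. Concretely, the incidence variety $C^{[n,n+1]}$ carries a universal quotient/support map recording the point of $C$ at which length jumps from $n$ to $n+1$; composing this support map $C^{[n,n+1]} \to C$ and taking the fiber over $x$ identifies $\iota_x$ with a section of the map $C^{[n,n+1]} \to C^{[n+1]}$ restricted over that fiber. Since $x$ is a smooth point of $C$, the support map is smooth near its fiber over $x$, and so $\iota_x(C^{[n]})$ is cut out inside a smooth total space as the preimage of the smooth point $\{x\} \subset C$. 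The preimage of a point under a smooth (hence flat, of relative dimension $1$) morphism is a regular embedding of codimension $1$, which is exactly what we want.

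The main computation is therefore to verify that $\iota_x$ is locally a regular embedding of codimension $1$ by a tangent-space count, parallel to the proof of Proposition \ref{prop:relfhilbsm}. Using Lemma \ref{lem:normalbundle}, the tangent space to $C^{[n,n+1]}$ at $(Z \subset Z')$ is $\ker(\phi - \psi)$ in the notation there, and the tangent space to $C^{[n]}$ is $H^0(\cN_{Z/C}, Z)$, of dimension $n$. One checks that $d\iota_x$ is injective with image of codimension exactly $1$ inside $T C^{[n+1]}$; equivalently the normal bundle $\cN_{\iota_x}$ is a line bundle. Because both $C^{[n]}$ and $C^{[n+1]}$ are smooth (of dimensions $n$ and $n+1$) near the relevant points, a closed immersion of smooth varieties with source of codimension $1$ is automatically a regular embedding, so the rank count completes the argument.

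I expect the main obstacle to be the careful local bookkeeping at $x$: one must ensure that the construction of $\iota_x$ really does factor through a \emph{smooth} piece of the nested Hilbert scheme and that adding the point $x$ corresponds to slicing a smooth support morphism over a reduced point, rather than over a singular or non-reduced locus. Because $x$ is chosen in the smooth locus $C_i^{sm}$, near $x$ the curve looks like a smooth affine line and the nested Hilbert scheme is smooth, so the support map is genuinely smooth there; this is what makes the codimension-$1$ regularity clean. The only delicate point is to confirm that the image $\iota_x(C^{[n]})$ meets the singular or nested loci of $C^{[n+1]}$ transversally enough that the local regular-sequence condition holds globally along the image, which follows from the smoothness of all the Hilbert schemes involved together with the dimension count above.
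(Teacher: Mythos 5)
There is a genuine gap, and it comes from the smoothness claims on which your whole argument rests. You assert that near the relevant points $C^{[n]}$, $C^{[n+1]}$ and $C^{[n,n+1]}$ are smooth, citing Proposition \ref{prop:relfhilbsm}; but that proposition (and Proposition \ref{prop:relhilbsm}) only gives smoothness of the \emph{total space of the relative family} $\cC^{[n]}\to B$ over the versal deformation base. The central fibers $C^{[n]}$, $C^{[n,n+1]}$ are singular whenever $C$ is, and the lemma must be proved for singular locally planar $C$. The root of the problem is a misplaced localization: the regular-embedding condition is local on the Hilbert scheme, i.e.\ it must be checked near every $Z\in C^{[n]}$, and such a $Z$ may have components sitting at the singular points of $C$ far from $x$. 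Replacing $C$ by its germ at the smooth point $x$ throws those components away and changes $C^{[n]}$ entirely, so the reduction "everything is smooth near $x$" is not available. Once smoothness is gone, your closing criterion --- a codimension-one closed immersion of smooth varieties is automatically regular --- collapses: for a singular ambient scheme one must actually exhibit a local generator of the ideal that is a nonzerodivisor, and a tangent-space rank count alone does not provide this. The factorization through the support map of $C^{[n,n+1]}$ has the same defect (that map is not smooth over a singular $C^{[n,n+1]}$), and in any case cutting out a Cartier divisor in $C^{[n,n+1]}$ and then pushing into $C^{[n+1]}$ via $q$ would still leave you to check that the composite is a regular embedding, which is not formal.

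The fix, which is what the paper does, is to localize correctly on the Hilbert scheme: near a given $Z$ containing $x$ with multiplicity $k$, choose an analytic neighborhood $U$ of $x$ meeting $Z$ only at $x$, so that locally $\iota_x$ becomes
$U^{[k]}\times (C\setminus\bar U)^{[n-k]}\hookrightarrow U^{[k+1]}\times (C\setminus\bar U)^{[n-k]}$,
acting by the identity on the (possibly singular) second factor. Since $x\in C_i^{sm}$, the first factor is $\C^{[k]}\cong\C^k$ with coordinates the coefficients of a monic degree-$k$ polynomial, and adding $x$ is multiplication by $(z-x)$ --- an injective \emph{linear} map $\C^k\to\C^{k+1}$, hence a regular embedding, whose defining linear equation remains a nonzerodivisor after taking the product with the reduced, pure-dimensional factor $(C\setminus\bar U)^{[n-k]}$. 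Your proposal never performs this splitting, and without it the argument does not go through for singular $C$.
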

\begin{proof}

This is a property which is local in the analytic topology (\cite[Chapter 2, Lemma 2.6]{ACG}). Suppose $Z\subset C$ is a subscheme of length $n$ 
which contains $x$ with multiplicity $k$.

If $U$ is an analytic open set around $x$ such that the only component of 
$Z$ contained in $\bar{U}$ (closure in the analytic topology) is $x$. Then 
locally around $Z$ the morphism is isomorphic to 
$$U^{[k]}\times (C\backslash \bar{U})^{[n-k]}\hookrightarrow (U)^{[k+1]}
\times (C\backslash \bar{U})^{[n-k]},$$
where the map is given on factors by adding $x$ and the identity map, 
respectively. The first map in local coordinates looks exactly like the 
inclusion $\C^{[k]}\to \C^{[k+1]}$ given as follows. If we identify 
coordinates on $\C^{[k]}$ with symmetric functions $a_i$ in the roots of 
some degree $k$ polynomial, i.e. coefficients of a monic polynomial of 
degree $k$, the map is given by $\sum_{i=0}^{k-1} a_iz^i\mapsto (z-x)
\sum_{i=0}^{k-1}a_iz^i$. But this last map is linear in the $a_i$ and of rank $k$, in particular a regular embedding.
\end{proof}

Consider the following diagram.
\begin{equation}\label{pushpulldiagram}
\begin{tikzcd}
&C^{[n,n+1]}\arrow[dl,"p"'] \arrow[dr,"q"]&\\
C^{[n]} & & C^{[n+1]}
\end{tikzcd}
\end{equation}

To define the operators $\mu_+$ and $\mu_-$, we want to define correspondences in homology between $C^{[n]}$ and $C^{[n+1]}$. This is done as follows.
By Propositions \ref{prop:relhilbsm}, \ref{prop:relfhilbsm}, we may embed $C$ into a smooth locally versal family $\pi: 
 \mathcal{C}\to B$ so that the relative family $\mathcal{C}^{[n]}$ is 
 smooth and $\pi^{-1}(0)=C$. After possibly doing an \'etale base extension, we may also assume that the family also has 
 sections $s_i: B\to \mathcal{C}$ hitting only the smooth loci of the 
 fibers and so that $s_i(0)=c_i$. 

Consider now the diagram
\begin{equation}\label{familydiagram}
\begin{tikzcd}
 & C^{[n,n+1]} \arrow[dl,"p"'] \arrow[dr,"q"] \arrow[dd,"i"] & \\
C^{[n]}\arrow[dd,"i"] & & C^{[n+1]}\arrow[dd,"i"]\\
 & \cC^{[n,n+1]} \arrow[dl, "\tilde{p}"'] \arrow[dr,"\tilde{q}"] & \\
 \cC^{[n]} & & \cC^{[n+1]}
\end{tikzcd}
\end{equation}
 where $i$ is the inclusion of the central fiber. 
 Since $\cC^{[n]}$ is smooth,
 from Property \ref{bivariantproperty:8} in Section \ref{sec:bivariant}, we have that 
 $$H^*(\cC^{[n,n+1]}\to \cC^{[n]})\cong H_{*-2n-\dim B}^{BM}(\cC^{[n,n+1]}).$$ 
Denote the fundamental class of $\cC^{[n,n+1]}$ under this isomorphism as $[\tilde{p}]$. Then pulling back $[\tilde{p}]$ along $i$ to
$H^*(C^{[n,n+1]}\to C^{[n]})$ gives us a canonical orientation, using which we define 
$p^!:H_*(C^{[n]})\to H_*(C^{[n,n+1]})$ as $p^!(\alpha)=\alpha \cdot i^*([\tilde{p}])$.
The definition of $q^!$ is identical, where we 
replace $C^{[n]}$ by $C^{[n+1]}$. 

We are finally ready to define the operators $\mu_+$ and $\mu_-$.

\begin{definition}\label{def:ops2}
Let $\mu_\pm: V \to V$ be the {\em Nakajima correspondences} $\mu_+=q_*p^!$, 
and $\mu_-=p_*q^!$. These are operators of respective bidegrees $(1,2)$ and $
(-1,0)$.
\end{definition}
\begin{remark}
 The $n$-degree in the above maps is easy to see from the definition. The homological degrees follow from the definition of the Gysin maps using 
 $i^*[\tilde{p}]$, which sits in homological degree $2n+2$, and the fact that degrees are additive under the bivariant product.
\end{remark}

We are now ready to define the algebra(s) $A$.
\begin{theorem}\label{thm-hardrelations}
The operators defined in Definitions \ref{def:ops}, \ref{def:ops2} satisfy the following commutation relations: $
[d_i,\mu_+]=[\mu_-,x_i]=1$, and the rest are trivial.
\end{theorem}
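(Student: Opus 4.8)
The plan is to establish the commutation relations
\[
[d_i,\mu_+]=1, \qquad [\mu_-,x_i]=1,
\]
together with the vanishing of all remaining commutators among the four families of operators $x_i$, $d_j=\partial_{y_j}$, $\mu_+=\sum_j y_j$, $\mu_-=\sum_j \partial_{x_j}$. Each relation is a statement about composites of pushforwards and Gysin pullbacks along the correspondence diagrams \eqref{pushpulldiagram} and \eqref{familydiagram}, so the entire proof reduces to comparing two geometric correspondences on $V$ and showing their difference is the identity (or zero). The fundamental technique throughout will be \emph{base change} in the bivariant formalism of Section \ref{sec:bivariant}: whenever I compose a pushforward $f_*$ with a Gysin map $g^!$ coming from a transverse (or Tor-independent) square, I can swap their order, and the composite $q_* p^!$ type operators can be analyzed by forming fiber products of the relevant Hilbert schemes and identifying the resulting space.

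First I would treat the mixed relations $[d_i,\mu_+]=1$ and $[\mu_-,x_i]=1$, which are the only nontrivial ones. For $[\mu_-,x_i]$: here $x_i=(\iota_i)_*$ adds a fixed smooth point $c_i$, while $\mu_-=p_*q^!$ removes a point via the nested Hilbert scheme. I would form the fiber product of the two correspondences $C^{[n]}\xrightarrow{\iota_i}C^{[n+1]}$ and $C^{[n,n+1]}\rightrightarrows C^{[n]},C^{[n+1]}$ and decompose it into two loci: the \emph{generic} locus where the removed point is disjoint from the added point $c_i$, and the \emph{diagonal} locus where the point being removed \emph{is} the point $c_i$ just added. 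On the generic locus the two operations commute (yielding the cancellation $x_i\mu_- = \mu_- x_i$ up to this term), while the diagonal locus, where removing the freshly-added $c_i$ returns us to the original subscheme, contributes the identity. Making this precise requires computing the excess intersection / normal bundle contribution on each stratum and checking the generic stratum is reduced of the expected dimension, which follows from the smoothness results of Propositions \ref{prop:relhilbsm} and \ref{prop:relfhilbsm}. The relation $[d_i,\mu_+]=1$ is the formal adjoint/dual statement, obtained by the same fiber-product analysis with the roles of $p$ and $q$, and of $*$ and $!$, interchanged; since $d_i=(\iota_i)^!$ and $\mu_+=q_*p^!$, the diagonal stratum again produces the identity.

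Next, for the vanishing relations I would argue case by case, but most reduce to a clean base-change. The commutators $[x_i,x_j]$ and $[d_i,d_j]$ vanish because adding (resp.\ removing via Gysin pullback along $\iota$) distinct fixed points at disjoint smooth locations are honestly commuting geometric operations, using Lemma \ref{lemma-homotopic} to move the points apart and the fact that the relevant embeddings are transverse. The relations $[x_i,\mu_+]=0$, $[x_i,d_j]=0$ for $i\neq j$ (and more generally all commutators not of the special mixed form above), $[d_i,\mu_-]=0$, $[\mu_+,\mu_-]=0$, and $[\mu_+,\mu_+]=[\mu_-,\mu_-]=0$ all follow from base change in squares where the relevant correspondences meet transversally, so that $f_* g^! = \tilde g^! \tilde f_*$ with no excess term; here the point is that, away from the single coincidence locus responsible for the identity in the mixed relations, the fiber products have the expected dimension and the pullback orientations match. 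The case $[d_i,x_j]=0$ for $i\ne j$ is the subtlest of these, since both operators touch the same Hilbert schemes, but the added point $c_j$ and the removed point $c_i$ lie on distinct components (or can be separated), so the corresponding diagonal locus is empty and no identity term appears.

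The main obstacle I anticipate is the careful bookkeeping of \emph{orientations and excess bundles} on the diagonal strata in the two nontrivial relations. The cleanness of the answer $[d_i,\mu_+]=[\mu_-,x_i]=1$ depends on showing that the normal bundle contribution on the coincidence locus is trivial of rank zero (so it contributes exactly the identity class, not a nontrivial Euler class), and on confirming that the canonical orientation $i^*[\tilde p]$ used in Definition \ref{def:ops2} restricts correctly to that locus. This is precisely where the smoothness of the \emph{total spaces} $\cC^{[n]}$ and $\cC^{[n,n+1]}$ is essential: it guarantees the fundamental classes $[\tilde p],[\tilde q]$ exist and that base change along the inclusion $i$ of the central fiber is valid, so that the local computation of the excess term can be carried out fiberwise and matched against the model local computation $\C^{[k]}\hookrightarrow\C^{[k+1]}$ from Lemma \ref{regularembedding}. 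I would therefore organize the proof so that the transversality/dimension estimates are established once, uniformly, and then invoked in each case.
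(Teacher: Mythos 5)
Your proposal follows essentially the same route as the paper: the trivial relations are handled by commuting squares and base change in the bivariant formalism, and the mixed relations $[d_i,\mu_+]=[\mu_-,x_i]=1$ are obtained from the fiber product $X_i=\cC^{[n,n+1]}\times_{\cC^{[n+1]}}\cC^{[n]}$, whose decomposition into a ``generic'' piece (giving the reversed composite) and a ``diagonal'' piece (giving the identity) is exactly the content of Lemma \ref{lemma-fundamentalclass}, $[X_i]=(f_i)_*[\cC^{[n-1,n]}]+(g_i)_*[\cC^{[n]}]$ --- and in fact no excess/Euler-class computation is needed, since both components have the expected dimension and are generically reduced. The one point to tighten is that you assert $[x_i,d_j]=0$ only for $i\neq j$, whereas the same-index relation $[x_i,d_i]=0$ is also required and is proved (as your parenthetical ``or can be separated'' hints) by replacing $c_i$ with a nearby smooth point $c_i'$ and invoking the homotopy invariance of Lemma \ref{lemma-homotopic}.
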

\begin{remark}
For $m=1$ we recover Theorem 1.2 in \cite{R}.
\end{remark}

\begin{definition}
Fix $m\geq 1$. Let $A_m$ be the $\Q$-algebra generated by the symbols $$x_1,\ldots, x_m,  d_1, \ldots, d_m, \mu_+, \mu_-$$ with the relations 
$$[d_i,\mu_+]=[\mu_-,x_i]=1, [x_i,x_j]=[x_i,d_i]=[x_i,\mu_+]=[d_i,\mu_-]=0.$$
\end{definition}
\begin{remark}
We can realize $A_m$ inside $\Weyl(\A^{2m}_\Q)$ as follows: Let $\A^{2m}$ 
have coordinates $x_1,\ldots,x_m,y_1,\ldots,y_m$ and $d_i=\partial_{y_i}, 
k=\sum_{i=1}^m \partial_{x_i}, j=\sum_{i=1}^m y_i$. Then from the commutation relations,
 we immediately have that $A_m$ is isomorphic to the 
subalgebra $\langle x_i, \partial_{y_i}, \sum_{i=1}^m \partial_{x_i},
\sum_{i=1}^m x_i\rangle\subset \Weyl(\A^{2m}_\Q)$.
\end{remark}
\begin{remark}
Although $A_m$ depends on $m$ we will be suppressing the subscript from the notation from here on. It should be evident from the context which $m$ we are considering.
\end{remark}

Let us give an outline of the proof of Theorem \ref{thm-hardrelations}. We first prove the 
trivial commutation relations in Subsections \ref{subsec-easy1}, 
\ref{subsec-easy2}. We then prove in Subsection \ref{sec:hardrels} that $[d_i,\mu_+]=1$ with the aid of the bivariant homology formalism, and then in a similar vein that $[\mu_-,x_i]=1$.

\subsection{Bivariant Borel-Moore homology}\label{sec:bivariant}

We now describe the bivariant Borel-Moore homology formalism from \cite{FM}. Suppose we are in a category of "nice" spaces; for example those that can be embedded in some $\R^n$.
We will not define bivariant homology here, but for us the most essential facts about it are the following ones:

\begin{enumerate}
\item The theory associates to maps $X\xrightarrow{f} Y$ a graded abelian group $H^*(X\xrightarrow{f} Y)$. We will be working over $\Q$ throughout also with bivariant homology. \label{bivariantproperty:1}

\item Given maps $X\stackrel{f}{\to}Y\stackrel{g}{\to}Z$, there is a product homomorphism
$$H^i(X\stackrel{f}{\to} Y) \otimes H^j(Y\stackrel{g}{\to}Z) \to H^{i+j}(X\stackrel{g \circ f}{\to}Z).$$
For $\alpha \in H^i(X\stackrel{f}{\to} Y)$ and $\beta \in H^j(Y\stackrel{g}{\to}Z)$ we thus get a product $\alpha \cdot \beta \in H^{i+j}(X\stackrel{g \circ f}{\to}Z)$.\label{bivariantproperty:2}

\item For any proper map $X \stackrel{f}{\to} Y$ and any map $Y \stackrel{g}{\to} Z$ 
there is a pushforward homomorphism $f_* : H^*(X\stackrel{g\circ f}{\to} Z) \to H^*(Y
\stackrel{g}{\to}Z)$.\label{bivariantproperty:3}

\item For any cartesian square
$$\begin{tikzcd}X' \arrow[r] \arrow[d,"g"] & X \arrow[d,"f"]\\
Y'\arrow[r] & Y\end{tikzcd}$$
there is a pullback homomorphism $H^*(X \stackrel{f}{\to} Y) \to H^*(X' \stackrel{g}{\to} Y')$. (Recall that a cartesian square is a square where $X'\cong X\times_Y Y'$.)\label{bivariantproperty:4}

\item Product and pullback commute: Given a tower of cartesian squares
$$\begin{tikzcd}X' \arrow[r,"h''"] \arrow[d,"f'"] & X \arrow[d,"f","\alpha"']\\
Y'\arrow[r,"h'"] \arrow[d,"g'"] & Y \arrow[d,"g","\beta"'] \\
Z' \arrow[r,"h"] & Z
\end{tikzcd}$$ we have $h^*(\alpha\cdot \beta)=h'^*(\alpha)\cdot h^*(\beta)$ in $H^*(X'\stackrel{g'\circ f'}{\to}Z')$.\label{bivariantproperty:5}

\item Product and pushforward commute: Given $$X\stackrel{f}{\to} Y\stackrel{g}{\to} Z\stackrel{h}{\to}W$$ with $\alpha\in H^*(X\stackrel{f\circ g} Z)$ and 
$\beta\in H^*(Z\stackrel{h}{\to}{W}$, $f_*(\alpha\cdot \beta)=f_*(\alpha)\cdot \beta$ in $H^*(Y\stackrel{h\circ g}{\to} W)$.\label{bivariantproperty:6}

\item For any space $X$, the groups $H^i(X \to \pt)$ and $H^i(X \stackrel{\id}{\to} X)$ 
are by construction canonically identified with $H^{BM}_{-i}(X)$ and $H^i(X)$, respectively. These are called the associated covariant and contravariant theories, respectively. Note that the three bivariant operations recover the usual homological operations of cup and cap product, proper pushforwards in homology and arbitrary pullbacks in cohomology.\label{bivariantproperty:7}

\item  If $Y$ is a nonsingular variety and $f: X \to Y$ is any morphism, the induced homomorphism $$H^*(X\stackrel{f}{\to} Y) \to H^{*-2\dim Y}(X\to \pt) = H^{BM}_{2\dim Y-*}(X)$$
given by taking the product with $[Y] \in H^{-2\dim Y}(Y\to \pt)$ is an isomorphism. Again the last equality is given by the associated covariant theory.
In such a situation we will frequently identify $H^{*}(X \to Y)$ with $H^{\mathrm{BM}}_{2\dim Y-*}(X)$.
In particular, if $X$ has a fundamental class $[X] \in H_{2\dim X}^{BM}(X)$, this induces a class $[X]\in H^{2(\dim Y - \dim X)}(X \to Y)$.
\item Any class $\alpha \in H^i(X \stackrel{f}{\to} Y)$ defines a Gysin pull-back map $f^!:H^{BM}_*(Y) \to H_{*-i}^{BM}(X)$ by $$f^!(\beta) = \alpha\cdot\beta, \ \ \ \ \ \forall \beta \in H^{\text{BM}}_*(Y).$$ \label{bivariantproperty:8}
\end{enumerate}

\section{Proof of the commutation relations}\label{sec:commrel}

\subsection{Proof of the trivial commutation relations for $x_i$ and $d_i$}\label{subsec-easy1}

We now show that $[x_i, x_j]=0$ for all $i,j$. This is fairly easy; under either composition $\iota_i\circ \iota_j, \iota_j\circ\iota_i$ we map
$Z\mapsto Z\cup x_i\cup x_j$ and as $(\iota_i\circ \iota_j)_*=x_ix_j$, we get $x_ix_j=x_jx_i$.

The next step is to describe the Gysin maps and their commutation relations. Denote these as before by 
$d_i=(\iota_{x_i})^!: H_*(C^{[n]})\to H_{*-2}(C^{[n-1]})$.

\begin{proposition}
We have $[d_i,d_j]=0$ and $[d_i,x_j]=0$.
\end{proposition}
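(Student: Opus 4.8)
The plan is to handle the two families of relations by the two standard mechanisms of the bivariant formalism of Section~\ref{sec:bivariant}: contravariant functoriality of Gysin pullbacks for the commutators among the $d_i$, and bivariant base change (the interplay of Properties~\ref{bivariantproperty:5} and~\ref{bivariantproperty:6}) for the commutators of $d_i$ with $x_j$.

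For $[d_i,d_j]=0$ the case $i=j$ is vacuous, so assume $i\neq j$. Each $d_i$ is the Gysin map $\iota_i^!$ attached to the regular embedding $\iota_i$ of Lemma~\ref{regularembedding}, i.e.\ capping with an orientation class $\theta_i\in H^2(C^{[n-1]}\xrightarrow{\iota_i}C^{[n]})$. By associativity of the bivariant product (Property~\ref{bivariantproperty:2}), the composite $d_jd_i=\iota_j^!\iota_i^!$ is capping with $\theta_i\cdot\theta_j$, the orientation of the composite embedding $\iota_j\circ\iota_i\colon C^{[n-2]}\to C^{[n]}$, $Z\mapsto Z\cup c_i\cup c_j$. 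Since $c_i\neq c_j$, the two composites $\iota_j\circ\iota_i$ and $\iota_i\circ\iota_j$ agree as maps (adding two distinct points is order-independent) and carry the same normal data, so $\theta_i\cdot\theta_j=\theta_j\cdot\theta_i$ and hence $d_id_j=d_jd_i$. Concretely this is the functoriality $(\iota_j\circ\iota_i)^!=\iota_i^!\iota_j^!$ for composable regular embeddings, which I would verify locally in the analytic topology exactly as in Lemma~\ref{regularembedding}: near $c_i$ and $c_j$ the two additions decouple into independent factors.

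For $[d_i,x_j]=0$, first take $i\neq j$ and form the square
\begin{equation*}
\begin{tikzcd}
C^{[n-1]}\arrow[r,"\iota_j"]\arrow[d,"\iota_i"'] & C^{[n]}\arrow[d,"\iota_i"]\\
C^{[n]}\arrow[r,"\iota_j"'] & C^{[n+1]}
\end{tikzcd}
\end{equation*}
in which horizontal arrows add $c_j$ and vertical arrows add $c_i$. Because $c_i\neq c_j$, the fibre product $C^{[n]}\times_{C^{[n+1]}}C^{[n]}$ is identified with $C^{[n-1]}$ via $W\mapsto(W\cup c_j,\,W\cup c_i)$, so the square is cartesian; I would check this scheme-theoretically by the same local analysis as above, where the conditions ``contains $c_i$'' and ``contains $c_j$'' are transverse. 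Pulling the orientation class $\theta_i$ back along this cartesian square (Property~\ref{bivariantproperty:4}) and invoking compatibility of the product with pullback and with proper pushforward (Properties~\ref{bivariantproperty:5} and~\ref{bivariantproperty:6}) yields the base-change identity $\iota_i^!(\iota_j)_*=(\iota_j)_*\iota_i^!$, which is exactly $d_ix_j=x_jd_i$. It then remains to treat the diagonal case $i=j$, i.e.\ $[d_i,x_i]=0$, which is the real content since there the two additions are not a priori transverse. The plan is to compute $x_i$ and $d_i$ using two \emph{distinct} points $c_i,c_i'\in C_i^{sm}$: the operator $x_i=(\iota_{c_i})_*$ is point-independent by Lemma~\ref{lemma-homotopic}, and $d_i=(\iota_{c_i'})^!$ is likewise point-independent because the orientation class $i^*[\tilde p]$ used to define the Gysin maps is locally constant as $c_i'$ varies in the connected section $s_i$ of the smooth family $\pi\colon\mathcal{C}\to B$. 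Replacing $c_i$ by $c_i'\neq c_i$ returns us to the transverse situation, and the same cartesian-square base change gives $[d_i,x_i]=0$.

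The main obstacle I anticipate is the scheme-theoretic verification that the square is cartesian with no excess intersection, together with checking that the regular-embedding orientation is preserved under base change as required to apply Properties~\ref{bivariantproperty:5}--\ref{bivariantproperty:6}. Both issues reduce, through the local model of Lemma~\ref{regularembedding}, to the assertion that adding points at distinct locations are independent operations, which is precisely why passing to distinct points in the diagonal case is the crucial maneuver.
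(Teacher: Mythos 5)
Your proposal is correct and follows essentially the same route as the paper: $[d_i,d_j]=0$ from the commuting inclusions plus functoriality of Gysin maps, and $[d_i,x_j]=0$ by a base-change/transversality argument whose key maneuver in the diagonal case $i=j$ is to replace $c_i$ by a distinct smooth point $c_i'$ and invoke the point-independence of the operators (Lemma~\ref{lemma-homotopic}). Your version merely formalizes in the bivariant language what the paper carries out at the level of cycle representatives, so there is no substantive difference.
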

\begin{proof}
Let $\alpha\in H_*(C^{[n]})$. As we saw before, $\iota_{x_i}\circ \iota_{x_j}=\iota_{x_j}\circ \iota_{x_i}$. By functoriality of the Gysin maps, $[d_i,d_j]=0$. 

Now choosing a representative for $\alpha$, we see that
$$d_ix_j(\alpha)=(\iota_{j})^![\iota_{i}(\alpha)]=[Z\in \alpha|c_i, c_j\subset Z].$$
However, we also have
$$x_jd_i(\alpha)=x_j[Z'\in\alpha|c_i\subset Z']=[Z\in \alpha|c_i,c_j\subset Z].$$
When the points are equal in the above, that is to say $i=j$, we pick a linearly equivalent point $c_i'$ near $c_i$. Since the inclusion maps $\iota_{c_i}, \iota_{c_i'}$ are homotopic in this case by Lemma \ref{lemma-homotopic}, we still have $[d_i,x_j]=0$.

\end{proof}

\subsection{Proof of the trivial commutation relations for Nakajima operators}\label{subsec-easy2}
As we saw before, the definition of the Nakajima operators requires making 
sense of the Gysin morphisms $p^!$ and $q^!$, which is done 
using the bivariant homology formalism. Recall that we are working with a 
fixed family $\cC\to B$ as in Section \ref{sec:definitions}, guaranteeing 
smoothness of $\cC^{[n]}$ and $\cC^{[n,n+1]}$. In this section and later 
on, all commutative diagrams should be thought of as commutative diagrams 
of topological spaces (corresponding to the analytic spaces of the 
varieties under consideration) and
living over $B$, such that we may restrict to the central fiber and obtain 
similar squares with calligraphic $\cC$s replaced by regular $C$s, i.e. our 
curve of interest. We will denote these restrictions in homology 
computations by the subscript $0$.

\begin{proposition}

We have $[x_i,\mu_+]=0$ and $[d_i,\mu_-]=0$.
\end{proposition}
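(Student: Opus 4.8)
The plan is to deduce both identities from the base-change compatibility of proper pushforward and Gysin pullback in the bivariant formalism (Properties \ref{bivariantproperty:5} and \ref{bivariantproperty:6}), after rewriting each composite as a single correspondence. Throughout I would work in the smooth family picture over $B$, so that $p^!$ and $q^!$ are honest Gysin maps attached to the fundamental classes of the smooth total spaces $\cC^{[n]}$ and $\cC^{[n,n+1]}$ (Propositions \ref{prop:relhilbsm}, \ref{prop:relfhilbsm}), while $x_i=(\iota_i)_*$ is pushforward along the regular embedding $\iota_i$ (Lemma \ref{regularembedding}). Writing $p_n,q_n$ for the two maps out of $C^{[n,n+1]}$, the guiding principle is that $x_i,d_i$ add or remove a point at the \emph{fixed} smooth point $c_i$, whereas $\mu_\pm$ act at a \emph{variable} point, so the two operations should be supported on configurations that can be taken disjoint.

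For $[x_i,\mu_+]=0$, functoriality of pushforward gives
\[
x_i\mu_+ = (\iota_i\circ q_n)_*\,p_n^!, \qquad
\mu_+ x_i = (q_{n+1})_*\,p_{n+1}^!\,(\iota_i)_*.
\]
I would commute $(\iota_i)_*$ past $p_{n+1}^!$ by base change through the cartesian square obtained by pulling $p_{n+1}$ back along the proper map $\iota_i$,
\begin{equation}
\begin{tikzcd}
W \arrow[r,"g"] \arrow[d,"h"'] & C^{[n+1,n+2]} \arrow[d,"p_{n+1}"] \\
C^{[n]} \arrow[r,"\iota_i"'] & C^{[n+1]}
\end{tikzcd}
\end{equation}
where $W=C^{[n]}\times_{C^{[n+1]}}C^{[n+1,n+2]}$. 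Since $g$ is the base change of a proper map it is proper, and the bivariant axioms yield $p_{n+1}^!(\iota_i)_*=g_*h^!$, hence $\mu_+x_i=(q_{n+1}\circ g)_*h^!$. Both $x_i\mu_+$ and $\mu_+x_i$ are now correspondences on $C^{[n]}\times C^{[n+2]}$ whose generic point is the configuration $(Z_0,\,Z_0\sqcup\{pt\}\sqcup\{c_i\})$ with the variable point and $c_i$ disjoint from $Z_0$ and from each other; on this dense locus the two correspondence spaces are canonically isomorphic with matching structure maps, so the two bivariant classes agree and the operators coincide.

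For $[d_i,\mu_-]=0$ I would argue in the same spirit. Here $\mu_-=p_*q^!$ and $d_i=\iota_i^!$ are both lowering operators, so I would first commute the Gysin map $\iota_i^!$ past the pushforward $p_{n,*}$ using base change through the analogous cartesian square over $C^{[n]}$, and then reorder $\iota_i^!$ against $q^!$ using associativity of the bivariant product (Property \ref{bivariantproperty:5}), which says composable Gysin classes commute. Comparing the two resulting correspondences again reduces to the disjointness of the generic configurations.

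The main obstacle is that the naive square relating the two orderings is \emph{not} cartesian: the natural map $C^{[n,n+1]}\to W$ fails to be an isomorphism exactly over the locus where the variable Nakajima point collides with $c_i$ and produces a length-two thickening there, which $\iota_i\circ q_n$ cannot reach. I would resolve this by invoking Lemma \ref{lemma-homotopic}: since $x_i$ and $d_i$ do not depend on the choice of $c_i\in C_i^{sm}$, I may move $c_i$ to a generic smooth point and restrict to the dense open locus where all supports are disjoint, where the identification of correspondences is honest. The genuinely technical point to check is that the canonical orientations $i^*[\tilde p]$ and $i^*[\tilde q]$ restrict compatibly under this identification, equivalently that the fiber products appearing above carry the expected fundamental classes in the smooth family picture; this is where the smoothness of the total spaces from Section \ref{sec:geometry} is essential.
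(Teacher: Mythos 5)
Your opening reduction is reasonable: rewriting $\mu_+x_i=(q_{n+1})_*p_{n+1}^!(\iota_i)_*$ and applying base change through the fiber product $W=C^{[n]}\times_{C^{[n+1]}}C^{[n+1,n+2]}$ to get $(q_{n+1}\circ g)_*h^!$ is essentially the first half of the paper's argument. The gap comes in how you then compare the two sides. You assert that the two correspondences are canonically isomorphic on the dense locus where the variable point, the point $c_i$, and $Z_0$ are pairwise disjoint, and conclude that the bivariant classes agree. Agreement of correspondences over a dense open set does not determine the operators: a Borel--Moore or bivariant class is not determined by its restriction to an open subset, and the class of the fiber product can acquire extra irreducible components supported over the complement. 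This is precisely the mechanism behind the \emph{non}-trivial commutator $[d_i,\mu_+]=1$: there the analogous fiber product $X_i$ (taken along $q$ rather than $p$) breaks into two components (Lemma \ref{lemma-fundamentalclass}), and the extra component over the collision locus is what produces the $+1$. So "generically the same correspondence" cannot be the reason this commutator vanishes. Your proposed repair --- moving $c_i$ to a generic smooth point and restricting to the disjoint-support locus --- does not help: for every $c_i\in C_i^{sm}$ the collision locus is nonempty, and Lemma \ref{lemma-homotopic} only lets you change $c_i$, not discard closed subsets of the correspondence.

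What actually closes the argument, and what the paper does, is to show that the comparison map $C^{[n,n+1]}\to W$, $(Z_1\subset Z_2)\mapsto (Z_1,\; Z_1\cup c_i\subset Z_2\cup c_i)$, is an isomorphism onto the whole of $W$, i.e.\ the square with vertices $C^{[n,n+1]}, C^{[n+1,n+2]}, C^{[n]}, C^{[n+1]}$ formed by $p$, $p'$ and the point-adding maps is genuinely cartesian, with no defect over the collision locus. Your diagnosis that the map fails to be an isomorphism where the variable point hits $c_i$ is incorrect: since $c_i$ is a smooth point of the curve, the punctual subschemes supported there form a single chain $(t^k)$, so from a point $(Z,\;Z\cup c_i\subset W_2)$ of $W$ one recovers the flag unambiguously by lowering the multiplicity of $W_2$ at $c_i$ by one --- this works even when the extra length of $W_2$ is a thickening at $c_i$. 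With that identification in hand the proof is the paper's: introduce $\iota_i'':C^{[n,n+1]}\to C^{[n+1,n+2]}$, use functoriality of proper pushforward on the (merely commutative) square involving $q,q'$, and base change (Properties \ref{bivariantproperty:5}, \ref{bivariantproperty:6}) on the cartesian square involving $p,p'$; the case of $[d_i,\mu_-]$ is dual. Relatedly, your appeal to "associativity of the bivariant product, which says composable Gysin classes commute" is not a valid step --- associativity gives no commutativity, and the identity $[q]\cdot[\iota_i']=[\iota_i'']\cdot[q']$ in $H^*(C^{[n,n+1]}\to C^{[n+2]})$ again has to come from the geometric identification above, not from a formal property.
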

\begin{proof}
Consider the following commutative diagram:

\begin{equation}\label{gysinpf}
\begin{tikzcd}
 & \mathcal{C}^{[n,n+1]} \arrow{rr}{\iota_{i}''} \arrow[swap]{dl}{p} \arrow{dr}{q} && \mathcal{C}^{[n+1,n+2]} \arrow{dr}{q'} \arrow[swap]{dl}{p'}\\ 
\cC^{[n]} \arrow{rr}{\iota_{i}}& & \mathcal{C}^{[n+1]} \arrow{rr}{\iota_{i}'}& & \mathcal{C}^{[n+2]}
\end{tikzcd}
\end{equation}

Here $\iota_i, \iota'_i$ are defined as adding points at the sections $s_i$ to $\cC^{[n]}, \cC^{[n+1]}$ respectively, and  $\iota''_i$ is adding points at the section $s_i$ as follows:
$(Z_1\subset Z_2)\mapsto (Z_1\cup s_i\subset Z_2\cup s_i)$.

We have by definition that $x_i\mu_+=((\iota_{s_i}')_*q_*p^!)_0$, where the 
subscript $0$ denotes restriction to the central fiber. 

In Diagram \ref{gysinpf}, the square formed by the maps
$\iota_i',q , q'$ and $\iota''_i$ is commutative, so on homology we have
$(\iota_{i}')_*q_*p^!=q'_*(\iota_{i}'')_*p^!$.
Similarly, the square formed by the maps $\iota_i,p',p,\iota_i''$ is commutative. Because of the fact that pushforward and the Gysin maps in bivariant homology also commute in this case, as explained in  Section \ref{sec:bivariant} (Property \ref{bivariantproperty:7}), we get
$q'_*(\iota_{i}'')_*p^!=q'_*(p')^!(\iota_{i})_*$. 
Restricting to $C$ we have $(q'_*(p')^!(\iota_{i})_*)_0=\mu_+x_i$.

Similarly, for the other commutation relation we have $\mu_-d_i=(p_*q^!(\iota_{i}')^!)_0=(p_*(\iota_{i}'')^!(q')^!)_0=((\iota_{i})^!p_*'(q')^!)_0=d_i\mu_-$.

Let us explain the restriction to central fiber once and for all. For example, in the last computation 
if $\alpha\in H_*(C^{[n]})$, we have 
$$\mu_- d_i \alpha=i^*[\tilde{p}]\cdot \iota_i^!\alpha=i^*[\tilde{p}]\cdot i^*[\tilde{d}_i]\cdot \alpha,$$
where $i^*[\tilde{d}_i]=[d_i]$ is the fundamental class corresponding to the Gysin map $\iota_i^!$ and $[\tilde{d}_i]$ is the corresponding class
in the family. Since the product and pullback commute in the bivariant theory,
$$i^*[\tilde{p}]\cdot i^*[\tilde{d}_i]\cdot \alpha=i^*([\tilde{p}]\cdot[\tilde{d}_i])\cdot \alpha.$$ Similarly, 
$d_i\mu_-\alpha=[d_i]\cdot i^*[\tilde{p}]\cdot \alpha =i^*([\tilde{d}_i]\cdot[\tilde{p}])\cdot \alpha$. So composing our operators in the family and then deducing the result for $C$ is justified.

\end{proof}

\subsection{Proof that $[\mu_-,x_i]=[d_i,\mu_+]=1$}\label{sec:hardrels}
To compute the desired commutation relation, we  compare the composition of 
the operators $d_i, \mu_+$ on $V$ in either order. By abuse of notation we will 
first consider $d_i$ and $\mu_+$ as operators acting on the space $\mathcal{V}=
\bigoplus_{n\geq 0} H_*^{BM}(\mathcal{C}^{[n]})$, and then use the 
properties of the bivariant theory, more precisely the ability to pull back 
in cartesian squares (Property \ref{bivariantproperty:5} in Section \ref{sec:bivariant}), to restrict to the special fiber and get an action 
on $V$.

Consider the diagrams

\begin{equation}\label{eq:square1}
\begin{tikzcd}
 & \mathcal{C}^{[n,n+1]} \arrow[d,"\kappa"', "q"] \arrow[dl,"\theta"',"p"] 
 & X_i \arrow[l,hook, "\widetilde{\lambda_i}"', "\widetilde{\iota_i}"] 
 \arrow[d,"\widetilde{\kappa_i}"', "\widetilde{q_i}"]\\ 
\cC^{[n]} & \mathcal{C}^{[n+1]} & \mathcal{C}^{[n]} 
\arrow[l,hook,"\lambda_i"',"\iota_i"]
\end{tikzcd}
\end{equation}
and
\begin{equation}
\begin{tikzcd}
 & &\mathcal{C}^{[n-1,n]} \arrow[dl,"\theta^\pr"',"p^\pr"] 
 \arrow[dr,"\kappa^\pr"',"q^\pr"]& \\
\mathcal{C}^{[n]} & \mathcal{C}^{[n-1]} \arrow[l,hook,"\lambda_i^
\pr"',"\iota_i^\pr"] & &\mathcal{C}^{[n]}
\end{tikzcd}
\end{equation}
where the two labels on the arrows denote the corresponding map $f: Y\to Z$ and a bivariant class
$\alpha\in H^*(X\stackrel{f}{\to} Y)$

In the first diagram 
$X_i = \mathcal{C}^{[n,n+1]}\times_{\mathcal{C}^{[n+1]}}\mathcal{C}^{[n]}$
 is the fiber product, and the square containing $X_i$ is cartesian. The 
morphisms $\iota_i, \iota_i^\pr$ correspond to adding a point at the 
sections $s_i: B \to \mathcal{C}$. The bivariant classes $\theta, 
\lambda_i, \kappa$ and their primed versions are the ones defined by 
fundamental classes, using the fact that the targets are smooth. The 
classes $\widetilde{\lambda_i}$ and $\widetilde{\kappa_i}$ are the cartesian 
pullbacks of $\lambda_i$ and $\kappa$, respectively.

Let $\alpha\in H_*(\mathcal{C}^{[n]})$. We first compute
\begin{align}\label{computation1}
d_i\mu_+(\alpha)&=\tilde{\lambda}_i\cdot q_*(\theta\cdot \alpha)=\tilde{q}_*(\lambda_i
\cdot \theta \cdot \alpha)\\
\mu_+d_i(\alpha)&=q'_*(\theta'\cdot \lambda_i'\cdot \alpha).\end{align}
Let us elaborate on the first computation a little bit. Here $\theta$ is 
the fundamental class and the isomorphism $H_{*-n-1}^{BM}(\cC^{[n,n+1]})\cong H^*(\cC^{[n,n+1]}\to \cC^{[n]})$ of the Borel-Moore homology group with the bivariant one is given by product with the 
fundamental class $\theta$. On the other hand, the Gysin pullback $\iota_i^!$ is by definition equal to the product in the bivariant theory 
with $\lambda_i$. In the first equation of \eqref{computation1}, we then use that the diagram in \eqref{eq:square1} in  is 
cartesian.

Let $f_i: \mathcal{C}^{[n-1,n]}\to X_i$ be given by $(Z_1\subset Z_2)\mapsto 
(Z_1 \cup s_i\subset Z_2\cup s_i, Z_1\cup s_i)$, and
$g_i: \mathcal{C}^{[n]}\to X_i$ be given by $Z\mapsto (Z\subset Z\cup s_i, Z)$.

\begin{lemma}\label{lemma-fundamentalclass}
For all $i$, $[X_i]=(f_i)_*[\mathcal{C}^{[n-1,n]}]+(g_i)_*([\mathcal{C}^{[n]}])$. 
\end{lemma}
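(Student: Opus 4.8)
The plan is to recognize $X_i$ as the pullback of a Cartier divisor and then read off its components and their multiplicities. First I would use that $\iota_i\colon \mathcal{C}^{[n]}\to\mathcal{C}^{[n+1]}$ is a regular embedding (the relative version of Lemma \ref{regularembedding}), with image the effective Cartier divisor $D_i:=\iota_i(\mathcal{C}^{[n]})=\{Z_2 : s_i\in Z_2\}\subset\mathcal{C}^{[n+1]}$; here I use that $\mathcal{C}^{[n+1]}$ is smooth (Proposition \ref{prop:relhilbsm}). Since the square in \eqref{eq:square1} is cartesian, the projection $\widetilde{\iota_i}$ is the base change of $\iota_i$, hence a closed immersion identifying $X_i$ with the scheme-theoretic preimage $q^{-1}(D_i)=q^{*}D_i$ inside $\mathcal{C}^{[n,n+1]}$ (the second factor $Z$ being recovered as $Z=Z_2-s_i$). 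Because $\mathcal{C}^{[n,n+1]}$ is smooth (Proposition \ref{prop:relfhilbsm}), $q^{*}D_i$ is an effective Cartier divisor with no embedded components, so $[X_i]=\sum_\alpha m_\alpha[X_i^\alpha]$ is a genuine sum of fundamental classes of its irreducible components with positive integer multiplicities, and it suffices to list the components and compute the $m_\alpha$.

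Next I would describe the support. A point $(Z_1\subset Z_2)$ lies in $q^{-1}(D_i)$ exactly when $s_i\in Z_2$, and I distinguish cases by the location of the length-one residual $x:=\mathrm{supp}(Z_2/Z_1)$. If $x=s_i$ then $Z_2=Z_1+s_i$, and these points sweep out $G_i:=\{(Z_1\subset Z_1+s_i)\}$, the image of $g_i$, which is isomorphic to $\mathcal{C}^{[n]}$ via $p$. If $x\neq s_i$ then necessarily $s_i\in Z_1$, so writing $Z_1=W_1+s_i$, $Z_2=W_2+s_i$ with $(W_1\subset W_2)\in\mathcal{C}^{[n-1,n]}$ of residual $x$, these points sweep out $F_i$, the image of $f_i$ (whose second entry is the length-$n$ scheme $Z_2-s_i=W_2$, recovering the point of the cartesian product). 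Both $G_i$ and $F_i$ are irreducible of relative dimension $n$, equal to that of the Cartier divisor $q^{*}D_i$, so they are precisely its irreducible components; as $f_i$ and $g_i$ are closed immersions onto them (with inverses given by removing $s_i$), one has $(g_i)_*[\mathcal{C}^{[n]}]=[G_i]$ and $(f_i)_*[\mathcal{C}^{[n-1,n]}]=[F_i]$.

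The main obstacle is the multiplicity computation $m_{G_i}=m_{F_i}=1$, i.e.\ that $q^{*}D_i$ is generically reduced along each component; a priori the collision component $G_i$, where the added point and the residual coincide at $s_i$, could occur with multiplicity two. I would settle this by a local computation at $s_i$, crucially using that the sections $s_i$ meet only the smooth loci of the fibers, so near $s_i$ the curve is a smooth branch and all local Hilbert-scheme models are the smooth-curve ones. Étale-locally near a generic point of either component the configuration splits off a factor supported near $s_i$ together with the residual point, and the smooth-curve model identifies this factor with $\mathbb{A}^2$ having coordinates $(a,b)=(\text{marked point of }Z_1,\ \text{residual})$, in which the condition $s_i\in Z_2$ becomes $\{ab=0\}$. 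This divisor is reduced with its two branches $\{b=0\}$ and $\{a=0\}$ being exactly $G_i$ and $F_i$, so each multiplicity is $1$. Combining, $[X_i]=[G_i]+[F_i]=(g_i)_*[\mathcal{C}^{[n]}]+(f_i)_*[\mathcal{C}^{[n-1,n]}]$, as claimed.
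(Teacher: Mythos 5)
Your proof is correct, and while it lands on the same decomposition as the paper (the two pieces are exactly the images of $f_i$ and $g_i$, described set-theoretically in the same way), the key step is justified by a genuinely different mechanism. The paper's proof establishes that the overlap $\im(f_i)\cap\im(g_i)$ has codimension one by citing \cite[Lemma 3.4]{R}, observes that $f_i,g_i$ are isomorphisms off this overlap, and concludes that the two images are the irreducible components, so that $[X_i]$ is the sum of their fundamental classes. You instead exhibit $X_i$ as the scheme-theoretic pullback $q^*D_i$ of the Cartier divisor $D_i=\iota_i(\mathcal{C}^{[n]})$ in the smooth total space $\mathcal{C}^{[n,n+1]}$, and then compute multiplicities from the local equation $\{ab=0\}$ near the section $s_i$. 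What your route buys is a point the paper leaves implicit: later in Section \ref{sec:hardrels} the class $\widetilde{\lambda}_i\cdot[\mathcal{C}^{[n,n+1]}]$ (equivalently $\lambda_i\cdot\theta$) is identified with $[X_i]$, and this is really the statement that the divisor $q^*D_i$ is generically reduced along both components --- i.e.\ that the a priori multiplicities $m_{F_i},m_{G_i}$ equal $1$. Your local model supplies exactly this, whereas the paper's argument only addresses the reduced cycle. Two small points to tidy: the chart with coordinates $(a,b)$ and equation $ab=0$ is really the local picture at a point of $F_i\cap G_i$ (at a generic point of $G_i$ the scheme $Z_1$ has no point near $s_i$ and the equation is just $b=0$), but since reducedness at one point of a component propagates to an open neighborhood and hence to the generic point, the conclusion is unaffected; and "irreducible" should be read component-by-component if $C$ (hence $\mathcal{C}^{[n]}$) is disconnected, a looseness the paper shares.
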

\begin{proof}
By Proposition \ref{prop:relhilbsm} and Proposition \ref{prop:relfhilbsm} the total spaces of the relative families $\cC^{[n]}\to B$ and $\cC^{[n,n+1]}\to B$ are smooth.

Consider the fiber product $X_i$. The images of $f_i$ and $g_i$ cover all of 
$X_i$. On the level of points (of the fibers) this is easy to see; we are 
looking at pairs consisting of a flag of subschemes of lengths $n, n+1$  
and a subscheme of length $n$ that project to the same length $n+1$ 
subscheme in the cartesian square \eqref{eq:square1}. Since the points in the image contain 
$s_i$,  either the above pairs come from adding $s_i$ to both parts of the 
flag as well as taking the second factor to be $Z_1\cup s_i$, or by 
creating a new flag by adding $s_i$ to $Z$ and taking $Z$ to be the second 
factor.

 By \cite[Lemma 3.4]{R}, the intersection of the images of $f_i$ and $g_i$ 
 is codimension one in $X_i$. Consider a point$(Z\subset Z\cup s_i, Z)\in \im(f_i)\cap \im(g_i)$, which
 can also be written as $(Z'\cup s_i\subset Z'\cup s_i\cup s_i, Z'\cup s_i)$. We can then remove the smooth point $s_i$ from both of the factors unambiguously.
 So the intersection is isomorphic to $\cC^{[n-1]}$. On 
 the complement of the intersection the maps $f_i, g_i$ are scheme-theoretic 
 isomorphisms, because we can unambiguously remove the point $s_i$ from $(Z\subset Z\cup s_i, Z)$ or $(Z_1 \cup s_i\subset Z_2\cup s_i, Z_1\cup s_i)$. 
 Hence, the images of $f_i, g_i$ yield a partition of $X_i$ to 
 irreducible components. In particular, the fundamental class $[X_i]$ is the sum of 
 the fundamental classes of the images, which are by definition the 
 pushforwards in question.
\end{proof}

\begin{corollary}\label{cor:class}
We have $[X_i]=f_*(\theta'\cdot \lambda_i')+g_*[\cC^{[n]}]$.

\end{corollary}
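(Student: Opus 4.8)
The plan is to recognize that Corollary \ref{cor:class} is simply Lemma \ref{lemma-fundamentalclass} transcribed into the language of bivariant classes over the smooth base $\mathcal{C}^{[n]}$, so that the whole task reduces to matching the two summands. Concretely, I would work inside the group $H^*(X_i\xrightarrow{\widetilde{q_i}}\mathcal{C}^{[n]})$, which by smoothness of $\mathcal{C}^{[n]}$ and Property \ref{bivariantproperty:8} is identified with $H^{BM}_*(X_i)$ via the product with $[\mathcal{C}^{[n]}]$. Under this identification it suffices to show that $f_*(\theta'\cdot\lambda_i')$ corresponds to $(f_i)_*[\mathcal{C}^{[n-1,n]}]$ and that $g_*[\mathcal{C}^{[n]}]$ corresponds to $(g_i)_*[\mathcal{C}^{[n]}]$; the corollary then follows at once from Lemma \ref{lemma-fundamentalclass}.

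For the first summand the key point is that a product of bivariant fundamental classes over smooth targets is again the bivariant fundamental class of the total space. I would establish this from associativity of the bivariant product together with Property \ref{bivariantproperty:8}. Since $\theta'$ and $\lambda_i'$ are by construction the fundamental classes of their sources over the smooth bases $\mathcal{C}^{[n-1]}$ and $\mathcal{C}^{[n]}$, we have $\lambda_i'\cdot[\mathcal{C}^{[n]}]=[\mathcal{C}^{[n-1]}]$ and $\theta'\cdot[\mathcal{C}^{[n-1]}]=[\mathcal{C}^{[n-1,n]}]$, whence
\[
(\theta'\cdot\lambda_i')\cdot[\mathcal{C}^{[n]}]=\theta'\cdot(\lambda_i'\cdot[\mathcal{C}^{[n]}])=[\mathcal{C}^{[n-1,n]}].
\]
Thus $\theta'\cdot\lambda_i'\in H^*(\mathcal{C}^{[n-1,n]}\xrightarrow{\iota_i'\circ p'}\mathcal{C}^{[n]})$ is precisely the bivariant avatar of $[\mathcal{C}^{[n-1,n]}]$, where I use that the bivariant product of $\theta'$ and $\lambda_i'$ lives over the target $\mathcal{C}^{[n]}$ along the composite $\iota_i'\circ p'$.

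To transport this to $X_i$ I would invoke Property \ref{bivariantproperty:6}, that proper pushforward commutes with the bivariant product. First I check directly on points, from the defining formulas for $f_i,g_i$ and the projection $\widetilde{q_i}$, that $\widetilde{q_i}\circ f_i=\iota_i'\circ p'$ and $\widetilde{q_i}\circ g_i=\id_{\mathcal{C}^{[n]}}$; this is what guarantees that $f_*(\theta'\cdot\lambda_i')$ and $g_*[\mathcal{C}^{[n]}]$ indeed land in $H^*(X_i\xrightarrow{\widetilde{q_i}}\mathcal{C}^{[n]})$. Then
\[
\bigl(f_*(\theta'\cdot\lambda_i')\bigr)\cdot[\mathcal{C}^{[n]}]=f_*\bigl((\theta'\cdot\lambda_i')\cdot[\mathcal{C}^{[n]}]\bigr)=f_*[\mathcal{C}^{[n-1,n]}]=(f_i)_*[\mathcal{C}^{[n-1,n]}],
\]
and the same computation with the unit class $1\in H^*(\mathcal{C}^{[n]}\xrightarrow{\id}\mathcal{C}^{[n]})$ in place of $\theta'\cdot\lambda_i'$ gives $g_*[\mathcal{C}^{[n]}]\mapsto(g_i)_*[\mathcal{C}^{[n]}]$. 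Summing and applying Lemma \ref{lemma-fundamentalclass} completes the argument.

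I expect the only real obstacle to be bookkeeping: keeping straight which bivariant group each class inhabits, and in particular confirming that the compositions $\widetilde{q_i}\circ f_i$ and $\widetilde{q_i}\circ g_i$ are exactly the maps for which the identification of Property \ref{bivariantproperty:8} is compatible with the proper pushforwards $f_*$ and $g_*$. No new geometric input is required beyond Lemma \ref{lemma-fundamentalclass}; the corollary is a formal consequence of that lemma and the calculus of bivariant classes.
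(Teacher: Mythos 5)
Your proposal is correct and follows essentially the same route as the paper: both proofs reduce the statement to Lemma \ref{lemma-fundamentalclass} by identifying $\theta'\cdot\lambda_i'$ with $[\cC^{[n-1,n]}]$ under the isomorphism of the bivariant group over the smooth base with Borel--Moore homology, and then substituting into the decomposition of $[X_i]$. The paper states this identification as a one-line ``rewrite,'' whereas you justify it via associativity of the bivariant product and check compatibility with the proper pushforwards $f_*$, $g_*$ using Property \ref{bivariantproperty:6}; this is the same argument with the bookkeeping made explicit.
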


\begin{proof}
 By Lemma \ref{lemma-fundamentalclass} $[X]=(f_i)_*[\cC^{[n-1,n]}]+
 (g_i)_*([\cC^{[n]}])$.
Rewrite $$[\cC^{[n-1,n]}]=\theta'\cdot \lambda_i'\in H^*(\cC^{[n-1,n]}\to \cC^{[n]})\cong H_{*-2n-2\dim B}^{BM}(\cC^{[n,n+1]}).$$ Then plugging this into the result of Lemma \ref{lemma-fundamentalclass} gives
$$[X_i]=f_*(\theta'\cdot \lambda_i')+g_*[\cC^{[n]}].$$
\end{proof}

Using Corollary \ref{cor:class},
we have 
\begin{equation}\label{nastycomputation}
d_ij(\alpha)=(\tilde{q}_{i})_*(\lambda_i\cdot \theta \cdot \alpha)=(\tilde{q}_{i})_*([X_i]\cdot\alpha)=
(\tilde{q}_{i})_*((f_i)_*(\theta'\cdot \lambda_i')\cdot \alpha)+(\tilde{q}_i)_*((g_i)_*[\cC^{[n]}]\cdot \alpha).\end{equation}
by the linearity of the pushforward. 
From Property \ref{bivariantproperty:6} in Section \ref{sec:bivariant}, we have that the pusforward is also functorial and commutes with products. Hence we have
\begin{equation}
(\tilde{q}_{i})_*((f_i)_*(\theta'\cdot \lambda_i')\cdot \alpha)+(\tilde{q}_i)_*((g_i)_*[\cC^{[n]}]\cdot \alpha)=(\tilde{q}_i\circ (f_i))_*(\theta\cdot \lambda_i'\cdot \alpha)+(\tilde{q}_i
\circ (g_i))_*(\alpha).\end{equation}
Finally,  the diagram in \eqref{eq:square1} is cartesian, and $\tilde{q_i}
\circ (g_i)=\id$, so
\begin{equation}
(\tilde{q}_i\circ (f_i))_*(\theta\cdot \lambda_i'\cdot \alpha)+(\tilde{q}_i
\circ (g_i))_*(\alpha)=q'_*(\theta'\cdot \lambda_i'\cdot \alpha)+\text{id}_*(\alpha)=jd_i(\alpha)+\alpha.
\end{equation}

Suppose now that $\alpha_0$ is a class in $H_*(C^{[n]})$. Then 
by the fact that pushforward, pullback, and the product in the bivariant theory commute,
$((\tilde{q}_{i})_0)_*((\lambda_i)_0\cdot \theta_0 \cdot \alpha_0)=(q'_0)_*((\theta')_0\cdot (\lambda_i')_0\cdot \alpha_0)+\text{id}_*(\alpha_0)$
and $d_i\mu_+=\mu_+d_i+\id: V\to V$, as desired.

The case of $[\mu_-,x_i]$ is much similar;
here we have
$$
\mu_-x_i(\alpha) = (p)_*(\kappa\cdot(\iota_i)_*(\alpha)) = (p\circ \widetilde{\iota}_i)_*(\widetilde{\kappa}_i\cdot\alpha)
$$
and
$$
x_i\mu_-(\alpha) = (\iota_i^\pr\circ p^\pr)_*(\kappa^\pr\cdot\alpha).
$$
Under the identification of $H^*(X \stackrel{\widetilde{q}}{\to} \mathcal{C}^{[n]})$ with $H_{*+2\dim \mathcal{C}^{[n]}}^{\mathrm{BM}}(X)$ 
we have $\widetilde{\kappa}_i = [X_i]$. This follows from
$$
\widetilde{\kappa}_i\cdot[\mathcal{C}^{[n]}] = \widetilde{\kappa}_i\cdot\lambda_i\cdot[\mathcal{C}^{[n+1]}] = \widetilde{\lambda}_i\cdot\kappa[\mathcal{C}^{[n+1]}] = \widetilde{\lambda}_i\cdot[\mathcal{C}^{[n,n+1]}] = [X_i],
$$
where the last equality is the fact that $X_i$ is a Cartier divisor in $\mathcal{C}^{[n,n+1]}$. Using Lemma \ref{lemma-fundamentalclass} we get
$$
\widetilde{\kappa}_i = [X_i] = (f_i)_*[\mathcal{C}^{[n-1,n]}] + (g_i)_*[\mathcal{C}^{[n]}] = (f_i)_*(\kappa^\pr) + (g_i)_*[\cC^{[n]}].
$$
A computation similar to \eqref{nastycomputation} now shows $\mu_-x_i(\alpha) = x_i\mu_-(\alpha) + \alpha$ as needed, and the restriction to the special fiber works exactly the same way. This finishes the proof of Theorem \ref{thm-hardrelations} and  thus of Theorem \ref{thm:mainthm}.

\section{Example: The node}\label{sec:node}
In this section we describe the representation $V$ for the the curve 
$\{xy=0\}\subseteq \P^2_\C$, which is the first nontrivial curve 
singularity 
with two components. 
\subsection{Geometric description of $\Cn$}
One first thing we may ask is how the components in 
Proposition \ref{prop-comp} look like? Ran \cite{Ra} describes 
the geometry of the Hilbert scheme of points on (germs of) nodal curves 
very thoroughly. For $n=0,1$ we get a point and $C$ itself, whereas 
$C^{[2]}$ is a chain of three rational surfaces, that intersect their 
neighbors transversely along projective lines. More generally, $C^{[n]}$ is 
a chain of $n+1$ irreducible components of dimension $n$, consecutive 
members of which meet along codimension one subvarieties.
\begin{lemma}\label{lem:irredcomps}
Denote by $M_{n,k}$ the irreducible component of $C^{[n]}$, where generically we have  $k$ points on the component $y=0$ of $C$, and on the component 
$x=0$  we have $n-k$ points. Then $$
M_{n,k}\cong \text{Bl}_{\P^{k-1}\times \P^{n-k-1}}(\P^k
\times \P^{n-k}).$$
\end{lemma}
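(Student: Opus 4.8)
The plan is to realize $M_{n,k}$ as a blow-up by writing down an explicit morphism and invoking the universal property of the Hilbert scheme. Write $C_1=L_1=\{y=0\}$ and $C_2=L_2=\{x=0\}$, each isomorphic to $\P^1$ and meeting at the node $p$. By Proposition \ref{prop-comp}, $M_{n,k}=\overline{(C_1^{sm})^{[k]}\times (C_2^{sm})^{[n-k]}}$, whose dense open locus is $\Sym^k(L_1\setminus p)\times\Sym^{n-k}(L_2\setminus p)\cong\A^k\times\A^{n-k}$, since each smooth locus is $\A^1$. I identify the natural compactification target as $\Sym^k(L_1)\times\Sym^{n-k}(L_2)=\P^k\times\P^{n-k}$, the space of pairs of effective divisors $(D_1,D_2)$ of degrees $k$ and $n-k$ on the two branches. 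Presenting $\P^k=\P(H^0(L_1,\cO(k)))$ and $\P^{n-k}=\P(H^0(L_2,\cO(n-k)))$ in coordinates so that vanishing at $p$ is cut out on $L_i$ by the leading coefficients $a_0$ and $b_0$, the locus where $D_1$ and $D_2$ \emph{both} contain $p$ is exactly the product of coordinate hyperplanes $H_1\times H_2\cong\P^{k-1}\times\P^{n-k-1}$, a smooth codimension-two complete intersection $V(a_0)\cap V(b_0)$.

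Away from the node the normalization $\nu:L_1\sqcup L_2\to C$ is an isomorphism, so over $\P^k\times\P^{n-k}\setminus(H_1\times H_2)$ the finite pushforward $\nu_*(D_1\sqcup D_2)$ is a flat family of length-$n$ subschemes of $C$, defining the ``branch divisor'' map and identifying this open set with the corresponding open part of $M_{n,k}$. The rational map $M_{n,k}\dashrightarrow\P^k\times\P^{n-k}$ fails to extend exactly along the locus where a subscheme concentrates length on both branches at $p$, where the restriction of $Z$ to a branch jumps (as already the length-two subscheme $(x^2,y)$ shows). The core of the argument is the local analysis at the node: working in $\C[[x,y]]/(xy)$, I will show that a single blow-up along $H_1\times H_2$ supplies the missing gluing parameter. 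On the blow-up $B=\{((D_1,D_2),[\mu:\nu]):a_0\nu=b_0\mu\}\subset(\P^k\times\P^{n-k})\times\P^1$ I will write an explicit ideal near $p$ interpolating the two branch multiplicities by the ratio $[\mu:\nu]$, and verify that the resulting family $\mathcal Z\subset B\times C$ is flat of relative length $n$, so that it agrees with $\nu_*(D_1\sqcup D_2)$ off the exceptional divisor.

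Granting this, the universal property of $\Cn$ yields a morphism $\Phi:B\to\Cn$. Its image contains the dense open $\A^k\times\A^{n-k}$ and is irreducible of dimension $n$, hence equals $M_{n,k}$, and $\Phi$ is birational and proper. To upgrade $\Phi$ to an isomorphism I will check, over the exceptional divisor, that it is injective and unramified: distinct points of the exceptional $\P^1$-bundle produce distinct punctual subschemes supported at $p$ (read off from the explicit family), and injectivity of the differential is verified through the tangent-space description of Lemma \ref{lem:normalbundle}. A proper, injective, unramified morphism is a closed immersion, so $\Phi:B\xrightarrow{\ \sim\ }M_{n,k}$, which is the claimed blow-up.

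The main obstacle is the local computation at the node: verifying that one blow-up along the codimension-two locus $H_1\times H_2$ already resolves the indeterminacy \emph{flatly}, including on the deeper strata where $D_1$ and $D_2$ meet $p$ with higher multiplicity, and confirming that the constructed family has constant length $n$. This is precisely where Ran's detailed description of $\Hilb$ of the node is indispensable, and it is also what pins down the gluing relation $a_0\nu=b_0\mu$. The injectivity and unramifiedness of $\Phi$ over the exceptional divisor form the second, more routine, half of the same local analysis.
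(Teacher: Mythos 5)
Your proposal takes essentially the same route as the paper: both identify $\P^k\times\P^{n-k}$ with pairs of divisors on the two branches via the product ideal $(y,a(x))(x,b(y))$, locate the indeterminacy along the codimension-two locus $V(a_0)\cap V(b_0)\cong\P^{k-1}\times\P^{n-k-1}$ where both branches meet the node, and resolve it by the single ratio $(a_0:b_0)$, which parametrizes the limiting ideals $(xy,xa'(x),yb'(y),Ab'(y)+Ba'(x))$. The local computation you defer is exactly the one the paper carries out, and your added scaffolding (flatness, universal property, proper $+$ injective $+$ unramified $\Rightarrow$ isomorphism) only makes explicit what the paper's degeneration argument leaves implicit.
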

\begin{proof} 

First of all, $\P^k\times \P^{n-k}$ has natural coordinates given by coefficients of polynomials $(a(x), b(y))$ of degrees $k$ and $n-k$.
It is also natural to identify the roots of these polynomials with the corresponding subschemes in $C_1, C_2\cong \P^1$.
From $(a(x),b(x))$ we construct an ideal in the homogeneous coordinate ring of $C$ by taking the product
$$I=(y,a(x))(x,b(x))=(xy, xa(x), yb(y), a(x)b(y)).$$ This determines a length $n$ subscheme so a point in $M_{n,k}$. Note that this map is invertible outside the locus where we have at least one point from each axis at the origin.

We can further write $a(x)b(y)=a_0b_0+a_0b'(y)+b_0a'(x)$ mod $(xy)$,
where $a(x)=a_0+a'(x)$, $b(y)=b_0+b'(y)$ and $a'(x), b'(x)$ have no constant term. 
Consider now the limit of $I=I_1$ as the products of the coordinates of the roots of $a(x), b(y)$ separately go to zero linearly, i.e. let
$t\to 0$ in $a_0=At$, $b_0=Bt$ and in the corresponding family of ideals $I_t$.
Since this is a flat family, the limiting ideal $I_0=\lim_{t\to 0}I_t$ has the same colength and support on the locus where at least one point from each axis is at the origin.
In particular $(a_0b_0+a_0b'(y)+b_0a'(x))/t \to Ab'(y)+Ba'(x)$ as $t\to 0$, and
$$\lim_{t\to 0}I_t=(xy, xa'(x), yb'(y), Ab'(y)+Ba'(x)).$$ Since all ideals 
in the locus of $M_{n,k}$ with at least one point from each axis at the origin can be written in this form, and $(A:B)\in \P^1$ determines the limiting ideal completely, we can identify $(A:B)$ with the normal coordinates $(a_0:b_0)$ and the natural map $$\pi: M_{n,k}=\overline{(C^{sm}_1)^{[k]}\times (C^{sm}_2)^{[n-k]}}\to \P^k\times \P^{n-k}$$ is 
the blowup along the locus where both $a(x)$ and $b(y)$ have zero as a 
root. 

See also \cite{Ran2} for a similar blow-up description.

\end{proof}

The intersections of the components can also be seen in this description.

\begin{lemma}
We have $E^n_{k,k+1}=M_{n,k}\cap M_{n,k+1}\cong \P^{n-k-1}\times \P^k$ and all the other intersections are trivial.
\end{lemma}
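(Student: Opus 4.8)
The plan is to construct an explicit parametrization of the intersection and then prove it is surjective via a local analysis at the node. Write $C_1=\{y=0\}$ and $C_2=\{x=0\}$, so that, as in Lemma \ref{lem:irredcomps}, a point of $M_{n,k}$ records a degree-$k$ divisor $a(x)$ on $C_1$ and a degree-$(n-k)$ divisor $b(y)$ on $C_2$ via the blowdown $\pi_k\colon M_{n,k}\to\P^k\times\P^{n-k}$. First I would define a morphism
$$\Phi\colon \P^k\times\P^{n-k-1}\to C^{[n]},\qquad (\alpha,\beta)\mapsto [\,I_{\alpha,\beta}\,],\quad I_{\alpha,\beta}=(xy,\ x\,\alpha(x),\ y\,\beta(y)),$$
with $\deg\alpha=k$ and $\deg\beta=n-k-1$. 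Writing the local ring of the node as $\C[x,y]/(xy)=\C[x]\times_\C\C[y]$, one checks that $I_{\alpha,\beta}$ cuts out the divisor $x\alpha(x)$ on $C_1$ and $y\beta(y)$ on $C_2$; since these share only the origin $O$, the colength is $(k+1)+(n-k)-1=n$ for every $(\alpha,\beta)$, so $\Phi$ is well defined, and it is injective because the scheme recovers $x\alpha(x)$ and $y\beta(y)$, hence $\alpha$ and $\beta$. When $\alpha(0)\neq 0$ the ideal $I_{\alpha,\beta}$ agrees with the generic $M_{n,k}$-ideal $(xy,xa,yb,ab)$ of Lemma \ref{lem:irredcomps} with $a=\alpha$, $b=y\beta$.

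Next I would check that $\operatorname{im}\Phi$ lies in $M_{n,k}\cap M_{n,k+1}$ and identify it abstractly. For generic $(\alpha,\beta)$ the subscheme $\Phi(\alpha,\beta)$ is the reduced union of $k$ points on $C_1$, $n-k-1$ points on $C_2$, and the node $O$; pushing $O$ to a nearby smooth point of $C_2$ exhibits it as a flat limit of configurations in the open stratum of $M_{n,k}$, while pushing it onto $C_1$ does the same for $M_{n,k+1}$. Since $\P^k\times\P^{n-k-1}$ is irreducible and $M_{n,k},M_{n,k+1}$ are closed, the whole image lies in both. Under $\pi_k$ the image maps onto the divisor $D=\{b(0)=0\}\cong\P^k\times\P^{n-k-1}$ (via $b=y\beta$), and in fact $\operatorname{im}\Phi$ is the proper transform $\widetilde D$ of $D$ in $M_{n,k}$. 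Because the blow-up center $\{a(0)=0,\,b(0)=0\}$ meets $D$ in the locus $\{a(0)=0\}\subset D$, which is a Cartier divisor in $D$, blowing it up changes nothing, so $\widetilde D\cong D\cong\P^k\times\P^{n-k-1}$; symmetrically $\operatorname{im}\Phi$ is the proper transform of $\{a(0)=0\}$ in $M_{n,k+1}$. This yields both the containment $\operatorname{im}\Phi\subseteq M_{n,k}\cap M_{n,k+1}$ and the claimed isomorphism type.

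The remaining, and hardest, point is the reverse containment $M_{n,k}\cap M_{n,k+1}\subseteq\operatorname{im}\Phi$, together with the vanishing of all non-adjacent intersections; both reduce to a statement about the node. Given $Z$ in the intersection, decompose $Z=Z_1'\sqcup Z_2'\sqcup Z_O$ into its parts on $C_1\setminus O$, on $C_2\setminus O$ (lengths $d_1,d_2$), and at the node (length $\ell$). Membership of $Z$ in $M_{n,k'}$ forces the punctual part $Z_O$ to lie in $\overline{(C_1^{sm})^{[k'-d_1]}\times(C_2^{sm})^{[\ell-k'+d_1]}}$; writing $S(Z_O)$ for the set of such $k'-d_1$, I must show that $S(Z_O)$ is always an interval of at most two consecutive integers, with length exactly two precisely when $Z_O$ is a monomial scheme $(x^a,y^b)$ aligned with the branches. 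Granting this, $k-d_1$ and $k+1-d_1$ both lie in $S(Z_O)$, which forces $Z_O=(x^{\,k-d_1+1},y^{\,\ell-k+d_1})$ and hence $Z=\Phi(\alpha,\beta)$ with $\alpha=x^{\,k-d_1}\prod_{Z_1'}(x-x_i)$ and $\beta=y^{\,n-k-1-d_2}\prod_{Z_2'}(y-y_j)$; and for $|k-j|\ge 2$ the integers $k-d_1$ and $j-d_1$ cannot both lie in a length-$\le 2$ interval, so $M_{n,k}\cap M_{n,j}=\emptyset$. The main obstacle is exactly this punctual analysis, which I would carry out by computing flat limits at the node with the explicit family of Lemma \ref{lem:irredcomps} (the limiting ideals $(xy,xa'(x),yb'(y),Ab'(y)+Ba'(x))$), showing that the punctual Hilbert scheme of the node is a chain of rational curves whose points are shared by at most two consecutive strata; alternatively one may invoke Ran's description \cite{Ra}.
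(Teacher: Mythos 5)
Your argument is correct, and its computational core is the same as the paper's --- namely the explicit flat limits $(xy,\,xa'(x),\,yb'(y),\,Ab'(y)+Ba'(x))$ from Lemma \ref{lem:irredcomps} together with the colengths of their restrictions to the two branches --- but you organize it quite differently. The paper stays inside the two blow-up models: after disposing of the locus where at most one point sits at the node, it compares a limit ideal arising from $M_{n,k}$ with one arising from $M_{n,\tilde{k}}$ by restricting to each axis and reading off colengths ($k$ or $k+1$ on the $x$-axis according to whether $B\neq 0$ or $B=0$, and symmetrically in $A$), which forces $\tilde{k}=k+1$, $B=\tilde{A}=0$, $A,\tilde{B}\neq 0$. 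You instead exhibit the intersection as the image of the explicit parametrization $\Phi(\alpha,\beta)=(xy,\,x\alpha(x),\,y\beta(y))$ and identify that image as the proper transform of $D=\{b(0)=0\}$; since the blow-up center meets $D$ in a Cartier divisor of $D$, this gives a cleaner and more self-contained derivation of the isomorphism $E^n_{k,k+1}\cong\P^k\times\P^{n-k-1}$ than the paper's rather implicit identification. What your route buys in clarity it pays for in the surjectivity step: you reduce both the reverse inclusion and the vanishing of $M_{n,k}\cap M_{n,j}$ for $|k-j|\geq 2$ to the claim that the set $S(Z_O)$ of strata whose closures contain a given punctual scheme at the node is an interval of at most two consecutive integers, attained only by the monomial ideals $(x^a,y^b)$. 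That claim is true and you have correctly isolated it as the crux, but you only sketch its proof; note that it drops out of the very family you cite, since the closure of the stratum with $j$ points approaching along the $x$-axis consists exactly of the ideals $(xy,\,x^{j+1},\,y^{\ell-j+1},\,Ay^{\ell-j}+Bx^{j})$ with $(A:B)\in\P^1$, whose restriction to the $x$-axis has colength $j$ or $j+1$ according to whether $B\neq 0$, so two such chains of $\P^1$'s meet only in the monomial ideals and only for consecutive $j$. Writing out that short computation (which is essentially the paper's colength argument transplanted to the punctual Hilbert scheme) would make your proof complete.
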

\begin{proof}

We continue in the notation of the proof of Lemma \ref{lem:irredcomps}.
Denote the locus where at least one point from either axis is at the origin in $M_{n,k}$ by $L_{n,k}$. Suppose then that we are outside $L_{n,k}\cup L_{n,\tilk}$ inside $M_{n,k}\cap M_{n,\tilk}$. Then only one point is at the origin and the this locus is identified naturally with the complement of the corresponding locus in $\P^{n-k-1}\times \P^k$ if $k+1=\tilk$ and is empty otherwise. We are thus left to studying the loci $L_{n,k}$. 

Consider again points $I=(xy, xa'(x),yb'(y)),Ab'(y)+Ba'(x))$ in $L_{n,k}$
and points $\tilde{I}=(xy, x\tila'(x),y\tilb'(y)),\tilA \tilb'(y)+\tilB \tila'(x))$ in $L_{n,\tilk}$.

First, restrict $I$ to the $x$-axis i.e. let $y=0$. Then $I|_{y=0}=(xa'(x),Ba'(y))$. If $B=0$ this has colength  $k+1$ since $a'(x)$ is of degree $k$. If $B\neq 0$ the colength is $k$.

Similarly, we get the colengths of $\tilde{I}|_{y=0}$ to be $\tilk$ or $\tilk+1$ depending on whether $\tilB$ is nonzero or not. Without loss of generality we can assume $\tilk>k$. In this case the only possibility for $I, \tilde{I}$ to be in the intersection $M_{n,k}\cap M_{n,k+1}$ is to have $k+1=\tilk, B=0$ and $\tilB\neq 0$.

A similar analysis for the $y$-axis shows that we must have $\tilA=0$ and $A\neq 0$. So in particular, the intersections $E^n_{k,\tilk}$ are isomorphic to $\P^{n-k-1}\times \P^k$ if $k+1=\tilk$ and empty otherwise.

\end{proof}
 
There is a heuristic way to think about the above lemma as well.
In $\Sym^{n}(C)$ the components $\P^{n-k}\times \P^k$ and $\P^{n-k-1}\times \P^{k+1}$ intersect when we move either a point from the $y$-axis to the origin in $\P^{n-k}\times \P^k$ or similarly for the $x$-axis in the next component. In particular, blowing up to get $M_{n,k}$ or $M_{n,k+1}$ the blow-up center is codimension one in the intersection, so 
nothing happens.

Now one may compute what $V$ is. There is a natural stratification of a blowup to the exceptional divisor and its complement. These both come with affine pavings, so a particularly easy way to compute the cohomologies of $C^{[n]}$, or at least the Betti numbers, is to count these cells. We have

\begin{proposition}
The bigraded Poincar\'e series for the space $V=\bigoplus_{n\geq 0} H_*(C^{[n]})$ is given by
$$P_V(q,t)=\frac{q^2t^2-q+1}{(1-q)^2(1-qt^2)^2}.$$

The grading corresponding to $t$ is the homological degree, whereas $q$ 
keeps track of the grading given by number of points.
\end{proposition}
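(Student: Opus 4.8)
Here is how I would prove the Poincar\'e series formula.

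\medskip

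The plan is to exploit the explicit geometry established in Lemma \ref{lem:irredcomps} and the intersection lemma preceding this proposition: $\Cn$ is the chain $\bigcup_{k=0}^{n}M_{n,k}$ of $n+1$ irreducible $n$-dimensional components whose only nonempty intersections are the consecutive ones $E^{n}_{k,k+1}=M_{n,k}\cap M_{n,k+1}\cong\P^{n-k-1}\times\P^{k}$, with all triple intersections empty. Since every piece in sight --- the $M_{n,k}$, the $E^{n}_{k,k+1}$, and hence $\Cn$ itself --- admits an affine paving, their rational (co)homology is free and concentrated in even degrees. I would therefore first compute the Poincar\'e polynomials of the building blocks, then combine them by inclusion--exclusion, and finally package everything into a generating function in $q$.

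\medskip

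Write $\beta_{d}(t)=1+t^{2}+\cdots+t^{2d}=\frac{1-t^{2d+2}}{1-t^{2}}$ for the Poincar\'e polynomial of $\P^{d}$, with the convention $\beta_{-1}=0$. By Lemma \ref{lem:irredcomps}, $M_{n,k}$ is the blowup of $\P^{k}\times\P^{n-k}$ along the smooth codimension-two center $\P^{k-1}\times\P^{n-k-1}$, so the blowup formula in codimension two gives $H^{*}(M_{n,k})\cong H^{*}(\P^{k}\times\P^{n-k})\oplus H^{*-2}(\P^{k-1}\times\P^{n-k-1})$, hence
\[
P(M_{n,k})=\beta_{k}\beta_{n-k}+t^{2}\,\beta_{k-1}\beta_{n-k-1},\qquad P(E^{n}_{k,k+1})=\beta_{n-k-1}\beta_{k}.
\]
The chain structure together with the vanishing of triple intersections yields the scissor relation $[\Cn]=\sum_{k=0}^{n}[M_{n,k}]-\sum_{k=0}^{n-1}[E^{n}_{k,k+1}]$ in the Grothendieck ring $K_{0}(\mathrm{Var})$; applying the virtual Poincar\'e polynomial and using that all varieties involved are affine-paved (so that virtual and actual Poincar\'e polynomials agree) gives
\[
P(\Cn)=\sum_{k=0}^{n}P(M_{n,k})-\sum_{k=0}^{n-1}P(E^{n}_{k,k+1}).
\]

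\medskip

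It then remains to sum over $n$. Set $g(q)=\sum_{d\geq0}\beta_{d}(t)\,q^{d}=\frac{1}{(1-q)(1-qt^{2})}$, the last equality by interchanging the two geometric sums. Reindexing the three inner sums as convolutions, $\sum_{k=0}^{n}\beta_{k}\beta_{n-k}$ is the $q^{n}$-coefficient of $g(q)^{2}$, $\sum_{k=0}^{n}\beta_{k-1}\beta_{n-k-1}$ that of $q^{2}g(q)^{2}$, and $\sum_{k=0}^{n-1}\beta_{k}\beta_{n-k-1}$ that of $q\,g(q)^{2}$. Therefore
\[
P_{V}(q,t)=\sum_{n\geq0}q^{n}P(\Cn)=\bigl(1-q+q^{2}t^{2}\bigr)g(q)^{2}=\frac{q^{2}t^{2}-q+1}{(1-q)^{2}(1-qt^{2})^{2}},
\]
as claimed.

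\medskip

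The only genuine subtlety --- and the step I would treat most carefully --- is the passage from the naive ``add the pieces, subtract the overlaps'' heuristic to an honest identity of Betti numbers, since signed additivity of $\dim H^{*}$ fails for a general closed cover. This is exactly where the purity input enters: because each stratum has an affine paving, all relevant cohomology is pure Tate in even degree, so the class of any such variety in $K_{0}(\mathrm{Var})$ lies in $\Z[\mathbb{L}]$ and its Poincar\'e polynomial is recovered by the specialization $\mathbb{L}\mapsto t^{2}$. I would verify the affine paving of $\Cn$ explicitly: each $M_{n,k}$ is paved (a blowup of a product of projective spaces along a product of projective spaces is), removing the closed cellular loci where $M_{n,k}$ meets its neighbours leaves a paving of the open stratum, and $\Cn$ is the disjoint union of these open strata together with the $E^{n}_{k,k+1}$. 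This both legitimizes the inclusion--exclusion above and confirms that $\Cn$ has no odd cohomology.
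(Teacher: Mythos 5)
Your proof is correct and follows essentially the same route as the paper's: compute $P(M_{n,k})$ and $P(E^n_{k,k+1})$ from the blowup description, combine by inclusion--exclusion, and sum the resulting convolution of geometric series over $n$. The only (minor) difference is that you justify the signed additivity of Betti numbers via affine pavings and virtual Poincar\'e polynomials, whereas the paper invokes the splitting of the Mayer--Vietoris sequence for the chain of components; both are valid, and your purity argument is in fact the mechanism the paper uses for the alternative count in the following proposition.
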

\begin{proof}
It is easily confirmed that the Poincar\'e 
polynomials of the components are given by
$$P_{M_{n,k}}(t)=t^2\left(\sum_{i=0}^{k-1} t^{2i}\right)\left(\sum_{i=0}^{n-k-1} t^{2i}
\right)+\left(\sum_{i=0}^{n-k} t^{2i}\right)\left(\sum_{i=0}^{k}t^{2i}\right).$$ 
Similarly, the Poincar\'e polynomials of the intersections are given by 
$Q_{E^n_{k,k+1}}(t)=(\sum_{i=0}^{k}t^{2i})(\sum_{i=0}^{n-k-1}t^{2i})$, $k\leq n-1$, 
and $\sum_{k=0}^n P_{M_{n,k}}(t)-\sum_{k=0}^{n-1}Q_{E^n_{k,k+1}}(t)$ is by Mayer-Vietoris the Poincar\'e polynomial of $C^{[n]}$.  It is easy to see that $\sum_{n\geq 0}q^n\left(\sum_{k=0}^n P_{M_{n,k}}(t)-\sum_{k=0}^{n-1}Q_{E^n_{k,k+1}}(t)\right)=P_V(q,t)$.
\end{proof}

\begin{remark}
There is another way to see what the Poincar\'e polynomial is.

It is always true (see e.g. \cite[Section 3.2]{MSV}) that for a 
finite collection of points $P \subset C$, we 
have the following equality of series in the Grothendieck ring of varieties 
$\Var_\C[[t]]$:
\begin{equation}\label{eq:descent}
\sum_{n=0}^\infty q^n \cdot [C^{[n]}] = \left(\sum_{n=0}^\infty q^n \cdot [(C\setminus P)^{[n]}] \right) 
\cdot \prod_{p \in P} \left( \sum_{n=0}^\infty q^n  \cdot [C_p^{[n]}] \right)
\end{equation}

This for the case of a unique singular point reduces the computation to 
that of the punctual Hilbert scheme, and for multiple points contributions from their punctual Hilbert schemes. More precisely,
\end{remark}
\begin{proposition}
We have that $$\left(1+\sum_{m=1}^\infty q^m(1+(m-1)t^2)\right)\frac{1}{(1-qt^2)^2}=P_V(q,t).$$
\end{proposition}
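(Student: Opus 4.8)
The plan is to derive the stated factorization directly from the descent formula \eqref{eq:descent}, identify its two factors, and reconcile the result with the previously computed closed form for $P_V(q,t)$. First I would apply \eqref{eq:descent} to $C=\{xy=0\}$ with $P=\{0\}$ the unique node, obtaining in $\Var_\C[[q]]$ the product
\begin{equation*}
\sum_{n\geq 0} q^n[C^{[n]}] = \Bigl(\sum_{n\geq 0}q^n[(C\setminus\{0\})^{[n]}]\Bigr)\cdot\Bigl(\sum_{m\geq 0}q^m[C_0^{[m]}]\Bigr).
\end{equation*}
I would then pass to Poincar\'e series by applying the realization $\Var_\C\to\Z[t]$ that sends a class carrying an affine paving to the Poincar\'e polynomial of its Borel--Moore homology, i.e. $[\A^1]\mapsto t^2$. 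This is legitimate on every class appearing here, since the relevant strata ($\Sym^n\A^1\cong\A^n$, the components $M_{n,k}$, and the punctual fibers) all have affine pavings, so their homology is of pure Tate type concentrated in even degrees; and because $C^{[n]}$ is projective its Borel--Moore and ordinary homology agree, so the left-hand side realizes to $P_V(q,t)$.

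Next I would compute the generic factor. Deleting the node leaves the two components each with their common point removed, so $C\setminus\{0\}\cong\A^1\sqcup\A^1$. For a smooth curve the Hilbert scheme of points is the symmetric product, and $\Sym^n\A^1\cong\A^n$, so each copy of $\A^1$ contributes $\sum_{n\geq 0}q^n[\A^n]=1/(1-q[\A^1])$, realizing to $1/(1-qt^2)$. Since Hilbert schemes of a disjoint union factor as a product over the components, the generic factor realizes to $1/(1-qt^2)^2$.

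It then remains to identify the punctual factor as $1+\sum_{m\geq 1}q^m(1+(m-1)t^2)$. The geometric input is that the punctual Hilbert scheme $C_0^{[m]}$ of the node has Poincar\'e polynomial $1$ for $m=0$ and $1+(m-1)t^2$ for $m\geq 1$, which can be extracted from the structure of the $M_{n,k}$ in Lemma \ref{lem:irredcomps} together with the gluing along the intersections $E^n_{k,k+1}$, or from Ran's description \cite{Ra}. Granting this, an elementary summation gives
\begin{equation*}
1+\sum_{m\geq 1}q^m\bigl(1+(m-1)t^2\bigr)=\frac{1}{1-q}+\frac{q^2t^2}{(1-q)^2}=\frac{q^2t^2-q+1}{(1-q)^2},
\end{equation*}
and multiplying by the generic factor $1/(1-qt^2)^2$ reproduces exactly the closed form $P_V(q,t)$ from the preceding proposition.

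I expect the only genuine difficulty to be the Betti-number count for the punctual Hilbert scheme $C_0^{[m]}$; the rest is the descent formalism plus the displayed geometric-series identity. In fact this difficulty can be bypassed: since $P_V(q,t)$ is already established, the descent formula forces the punctual factor to equal $P_V(q,t)\cdot(1-qt^2)^2=(q^2t^2-q+1)/(1-q)^2$, so the entire proof reduces to verifying the single summation above, which recovers the asserted form $1+\sum_{m\geq 1}q^m(1+(m-1)t^2)$. Thus the statement is either a consistency check against the node's local geometry or, more cheaply, one elementary identity.
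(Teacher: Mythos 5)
Your proof is correct and follows essentially the same route as the paper: both use the descent formula \eqref{eq:descent} at the node, Ran's description of the punctual Hilbert scheme as a chain of $m-1$ projective lines (giving the factor $1+(m-1)t^2$), purity from affine pavings to pass to Poincar\'e polynomials, and a direct check against the closed form of $P_V(q,t)$. Your explicit computation of the generic factor via $C\setminus\{0\}\cong\A^1\sqcup\A^1$ and the closing remark that the punctual factor is forced once $P_V$ is known are slightly more detailed than the paper's terse treatment, but the argument is the same.
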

\begin{proof}
The punctual Hilbert scheme $\Cn_0$ is by e.g. \cite{Ra} a chain of $n-1$ projective lines intersecting transversally, where the intersection points are given by the ideals $(x^i,y^{n-i})\subset \C[[x,y]]/xy$, $i=1,\ldots, n-1$, and the other ideals are given by $x^i+\lambda y^{n-i}$, $\lambda\in \C^*, i=1,\ldots, n-1$.  Taking for example 
the point corresponding to the ideal $(x,y^{n})$ as  the zero-cell
in $C^{[n]}_0$, and the $n-1$ affine lines given by the other ideals as the one-cells,  we get an affine paving of $C^{[n]}$. In other words, each affine cell is parameterized by $a, b, c\leq n$ with $a+b+c=n$ and $d\leq c$, where $a, b, c$ are the number of points on the smooth loci of the components as well as the number of points at the origin, and $d$ records the cell in the punctual Hilbert scheme. Since the homology of a variety with an affine paving is pure, by using \eqref{eq:descent} above, we get the formula $\left(1+\sum_{m=1}^\infty q^m(1+(m-1)t^2)\right)\frac{1}{(1-qt^2)^2}$. It is again easily checked that this equals $P_V(q,t)$ as desired.
\end{proof}

Figure \ref{table} shows the graded dimension of $V$ as a bigraded vector space. We can now compare this table to the cell decomposition given by the blow-up description above.

\begin{center}
\begin{figure}
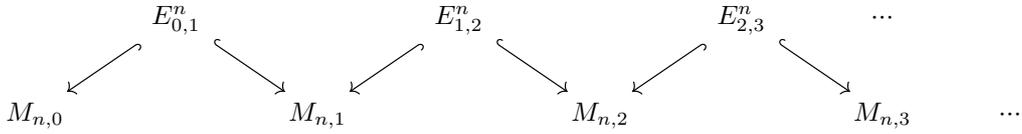

$$\begin{array}{|c|c|c|c|c|c|c|c|}
 \hline \;& \textbf{0} & \textbf{2} & \textbf{4} & \textbf{6} & \textbf{8} & \textbf{10} & \cdots \\ \hline
\textbf{0}& 1 & 0 & 0 & 0 & 0 &0 & \cdots \\ \hline
\textbf{1} & 1 & 2 & 0 & 0 & 0 & 0 & \cdots \\ \hline
\textbf{2} & 1 & 3 & 3 & 0 & 0 & 0 & \cdots \\ \hline
\textbf{3} & 1 & 4 & 5 & 4 & 0 & 0 & \cdots \\ \hline
\textbf{4} & 1 & 5 & 7 & 7 & 5 & 0 & \cdots \\ \hline
\textbf{5} & 1 & 6 & 9 & 10 & 9 & 6 & \cdots \\ \hline
\vdots&\vdots&\vdots&\vdots&\vdots&\vdots&\vdots&\ddots\\ \hline
\end{array}$$
\caption{The dimensions $V_{n,d}$ i.e. the Betti numbers of $C^{[n]}$. The 
columns are labeled by homological degree $d$ and the rows by the number of 
points $n$.}\label{table}
\end{figure}\end{center}

\subsection{Computation of the A-action}

We will now investigate the action of the algebra 
$A=\langle x_1,x_2,\mu_+,\mu_-,d_1,d_2\rangle_\Q$ on $V$.

Consider $V_{\bullet, 2n}=\bigoplus_i V_{i,2n}\subset V$, i.e. all the classes 
in homological degree $2n$. Denote by $[M_{n,k}]$ the fundamental class of 
the irreducible component $M_{n,k}$ of $C^{[n]}$ as described in the 
previous subsection.

\begin{theorem}\label{thm:fundclasses}
The fundamental classes $[M_{n,k}]\in V_{n,2n}$ generate $V_{\bullet,2n}$ as a $\Q[x_1,x_2]$-module.
\end{theorem}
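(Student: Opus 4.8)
The plan is to prove the statement one value of the point-number $i\ge n$ at a time: I will show that the $(n+1)(i-n+1)$ classes $x_1^ax_2^b[M_{n,k}]$ with $a+b=i-n$ and $0\le k\le n$ already span $H_{2n}(C^{[i]})$. The base case $i=n$ is immediate: by Proposition \ref{prop-comp} the space $C^{[n]}$ has dimension $n$ and its top-dimensional irreducible components are exactly the $M_{n,k}$, so the top homology $H_{2n}(C^{[n]})$ is freely spanned by the fundamental classes $[M_{n,k}]$. For general $i$, recall that every space in sight — the components $M_{i,k}$, which are blow-ups of products of projective spaces by Lemma \ref{lem:irredcomps}, their pairwise intersections $E^i_{k,k+1}\cong\P^{i-k-1}\times\P^k$, and $C^{[i]}$ itself — carries an affine paving, so all odd homology vanishes and the Mayer--Vietoris sequence of the chain $C^{[i]}=\bigcup_k M_{i,k}$ degenerates (this is the same inclusion--exclusion bookkeeping that produced the Poincar\'e polynomial above) into the short exact sequences
\[
0\to\bigoplus_{k} H_{2n}(E^i_{k,k+1})\to\bigoplus_k H_{2n}(M_{i,k})\xrightarrow{\ \sum_k(\iota_k)_*\ }H_{2n}(C^{[i]})\to 0,
\]
where $\iota_k\colon M_{i,k}\hookrightarrow C^{[i]}$ are the inclusions. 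Thus it suffices to show that the pushforward of every class in each $H_{2n}(M_{i,k})$ lies in the span of the $x_1^ax_2^b[M_{n,k'}]$.

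Next I would make the generators geometric. Writing $\pi\colon M_{i,k}\to\P^k\times\P^{i-k}$ for the blow-up of Lemma \ref{lem:irredcomps}, let $H_1,H_2$ be the pullbacks of the hyperplane classes of the two projective factors and $e$ the class of the exceptional divisor; then $H_{2n}(M_{i,k})$ is spanned by the \emph{base} monomials $\pi^*(H_1^aH_2^b)$ with $a+b=i-n$ together with the \emph{exceptional} classes supported on $e$. Using the local model of $\iota_{c_i}$ from Lemma \ref{regularembedding}, namely multiplication of the relevant one-variable polynomial by $(z-c_i)$, one checks that $x_1^ax_2^b[M_{n,k'}]$ is the pushforward of the subvariety of $M_{i,k'+a}$ on which the $C_1$-polynomial is divisible by $(z-c_1)^a$ and the $C_2$-polynomial by $(w-c_2)^b$; its class equals $\pi^*(H_1^aH_2^b)$ up to a correction supported on $e$. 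Hence the generators $x_1^ax_2^b[M_{n,k'}]$ are exactly the pushforwards of the base monomials, modulo exceptional classes. For the two end components $M_{i,0}$ and $M_{i,i}$, which are honest projective spaces $\P^i$ (the blow-up center being empty), there are no exceptional classes and these base monomials are hit on the nose.

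The crux — and the step I expect to be the main obstacle — is to show that the pushforward to $C^{[i]}$ of every exceptional class again lies in the span of the base-monomial pushforwards. Here I would use that, by the blow-up description of Lemma \ref{lem:irredcomps} and the heuristic accompanying it, the exceptional divisor of $M_{i,k}$ is precisely the locus of configurations in which the node is occupied, so it sits inside the pairwise intersections $E^i_{k-1,k}\cup E^i_{k,k+1}$ with the neighbouring components. A class supported there therefore restricts from $M_{i,k}$ and from its neighbour alike, and the Mayer--Vietoris relations above identify its pushforward with one pushed in from $M_{i,k\pm1}$. Starting from the end components $M_{i,0},M_{i,i}$, whose homology is already base-monomial, an induction inward along the chain rewrites every exceptional class as a combination of base-monomial pushforwards. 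The point that must be pinned down carefully is exactly that the Mayer--Vietoris restriction maps send the $e$-supported classes to the hyperplane monomials of $E^i_{k,k+1}\cong\P^{i-k-1}\times\P^k$, which are themselves the restrictions of $\pi^*(H_1^aH_2^b)$ from both adjacent components; this requires making precise how the exceptional loci sit inside the intersections. Granting this, the base-monomial pushforwards span $H_{2n}(C^{[i]})$, and since they coincide with the $x_1^ax_2^b[M_{n,k'}]$ modulo exceptional classes — which are themselves in the span — the classes $x_1^ax_2^b[M_{n,k'}]$ span $H_{2n}(C^{[i]})$, completing the induction.
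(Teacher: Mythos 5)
Your overall strategy (Mayer--Vietoris for the chain of components, reduce to pushforwards of base monomials, then deal with the exceptional classes by moving them along the chain) is the right one and is essentially dual to what the paper does; but the geometric claim you use to handle the crux step is false as stated, and this is precisely the step you flag as needing to be pinned down. The exceptional divisor $e_k$ of $M_{i,k}\to\P^{i-k}\times\P^k$ is \emph{not} contained in $E^i_{k-1,k}\cup E^i_{k,k+1}$. It is a $\P^1$-bundle over the blow-up center $\P^{i-k-1}\times\P^{k-1}$ with fiber coordinate $(A:B)$ as in Lemma \ref{lem:irredcomps}, and the computation in the proof of the intersection lemma shows that a point of $e_k$ lies in $M_{i,k+1}$ only if $B=0$ and in $M_{i,k-1}$ only if $A=0$: the generic point of $e_k$ (both $A,B\neq 0$) lies in no other component. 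So a class supported on $e_k$ does not, in general, ``restrict from $M_{i,k}$ and from its neighbour alike,'' and the Mayer--Vietoris relations cannot be applied to it directly. (Dimension count confirms this: $e_k$ and $E^i_{k,k+1}$ are both divisors in $M_{i,k}$, and their intersection is a proper subvariety of each.)

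What is true, and what rescues the argument, is the weaker statement that every class in $H_*(e_k)$ is, \emph{modulo classes pulled back from the blow-up center}, homologous in $M_{i,k}$ to a class supported on the section $\{B=0\}=e_k\cap E^i_{k,k+1}$ (equivalently on $\{A=0\}$): in cohomological terms $\zeta_{i,k}=a_{i,k}-\mu_{i,k}$, where $\mu_{i,k}$ is the class of the intersection divisor. This is exactly the content of Lemma \ref{gluing} in the paper, and it is the relation that makes the telescoping induction along the chain work: each time you push an exceptional class into the neighbouring component you pick up base classes plus a new exceptional class, and the process terminates at the end components $M_{i,0},M_{i,i}$, which have none. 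The paper runs this telescoping dually, in cohomology: it reformulates the theorem as $\bigcap_i\ker x_i^*=0$ on $H^{<2k+2}(C^{[k+1]})$, observes that on $\bigoplus_k H^*(M_{i,k})$ the common kernel is spanned by the top exceptional monomials $\zeta_{i,k}a_{i,k}^{k-1}b_{i,k}^{i-k-1}$, and then uses the gluing relation $\zeta_{i,k}=a_{i,k}-a_{i,k+1}+\zeta_{i,k+1}$ on a global class to show $x_2^*$ does not kill it. If you replace your containment claim by the precise statement of Lemma \ref{gluing} and redo the chain induction with the resulting base-class corrections, your homological version goes through; as written, the proof has a genuine gap at its central step.
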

\begin{proof}
This is equivalent to proving that the maps $x_i|_{V_{k,2n}}$ are jointly surjective for $k\geq n$. Dualizing the maps to pullbacks $x_i^*|_{V_{k,2n}}$ in cohomology, this condition is to say that the operators
$x_i^*: H^{<2k+2}(C^{[k+1]})\to H^*(C^{[k]})$ must satisfy $\bigcap \ker x_i^*=0$.

We have the following diagram for the components of $C^{[n]}$ and their intersections.

$$\begin{tikzcd}
 & E^{n}_{0,1} \arrow[dl, hook] \arrow[dr,hook] & & E^{n}_{1,2} 
 \arrow[dl, hook] \arrow[dr,hook] & & E^{n}_{2,3} \arrow[dl, hook] 
 \arrow[dr,hook] & \cdots &  \\
 M_{n,0} & & M_{n,1} & & M_{n,2} & & M_{n,3} & \cdots
\end{tikzcd}
$$

Since $C^{[n]}$ is a chain of the components $M_{k,n-k}$ 
intersecting transversally, without triple intersections, the Mayer-
Vietoris sequence in homology for unions splits to short exact sequences:

$$0\to \bigoplus_{k=0}^{n-1} H_i(E^n_{k,k+1})\to \bigoplus_{k=0}^n 
H_i(M_{n,k})\to H_i(C^{[n]})\to 0$$

Dually, we have an exact sequence the other way around in cohomology.
By our blow-up description of $\pi: M_{n,k}\mapsto 
\P^{n-k}\times \P^k$, we have as graded vector spaces 
that (see e.g. \cite{GH}), Chapter 6):

$$H^*(M_{n,k})=\frac{\pi^* H^*(\P^{n-k}\times \P^k)\oplus 
H^*(\P(\mathcal{N}_{\P^{n-k}\times \P^k/\P^{n-k-1}\times \P^{k-1}}))}{\pi^*H^*(\P^{n-k-1}\times \P^{k-1})}.$$
In particular, we can write $$H^*(\P^{n-k}\times \P^k)=\Q[a_{n,k}, 
b_{n,k}]/(a_{n,k}^{n-k+1}, b_{n,k}^{k+1})$$
as well as $$H^*(\P(\mathcal{N}_{\P^{n-k}\times \P^k/\P^{n-k-1}\times 
\P^{k-1}}))=
\Q[a_{n,k}',b_{n,k}',\zeta_{n,k}]/(a_{n,k}'^{n-k},b_{n,k}'^{k}, 
(\zeta_{n,k}-a_{n,k}')(\zeta_{n,k}-b_{n,k}')),$$
where $\zeta_{n,k}=c_1(\mathcal{O}(1))$.

Since the classes $a_{n,k}''^ib_{n,k}''j\in \pi^*H^*(\P^{n-k-1}\times \P^{k-1})\cong \Q[a_{n,k}'',b_{n,k}'']/(a_{n,k}''^{n-k},b_{n,k}''^k)$ are in $\pi^*H^*(\P^{n-k}\times \P^k)$ identified with $a_{n,k}^ib_{n,k}^j$ where $i<n-k, j<k$ and in the exceptional divisor with $a_{n,k}'^ib_{n,k}'^j$,
the quotient map as graded $\Q$-modules is the one identifying $a_{n,k}^ib_{n,k}^j$
with $a_{n,k}'^ib_{n,k}'^j$.

All in all, we can write as a graded $\Q$-vector space that 
\begin{equation}\label{eq:cohomology}
H^*(M_{n,k})=\langle 1,a_{n,k},b_{n,k},\ldots, a_{n,k}^{n-k}b^k_{n,k}, \zeta_{n,k}, \ldots, \zeta_{n,k} a_{n,k}^{n-k-1}b^{k-1}\rangle_\Q.
\end{equation}


\begin{example}
We have that $H^*(M_{2,0})=\spann\{1,a_{3,0},a_{3,0}^2\}$, as expected, since $M_{2,0}\cong \P^2_\C$.
We also have that $H^*(M_{2,1})=\spann\{1,a_{2,1},a_{2,1}^2,b_{2,1},a_{2,1}b_{2,1}, a_{2,1}^2b_{2,1},\zeta_{2,1},\zeta_{2,1}a_{2,1}\}.$
\end{example}

Having described the cohomology of the components $M_{n,k}$ we can get back to our exact sequence. 
Identify $$H^*(\P^{n-k-1}\times \P^{k})\cong \Q[\mu_{n,k},\nu_{n,k}]/(\mu_{n,k}^{n-k}, \nu_{n,k}^{k+1}).$$

\begin{lemma}\label{gluing}
Under the inclusion $E^n_{k,k+1}\hookrightarrow M_{n,k}$
we have $\mu_{n,k}\mapsto a_{n,k}-\zeta_{n,k}$ and $\nu_{n,k}\mapsto b_{n,k}-\zeta_{n,k}$ in cohomology. Similarly, under the inclusion $E^n_{k,k+1}\hookrightarrow M_{n,k+1}$ we have
$\mu_{n,k}\mapsto a_{n,k+1}-\zeta_{n,k+1}$ and $\nu_{n,k}\mapsto b_{n,k+1}-\zeta_{n,k+1}$.
\end{lemma}
\begin{proof}
The class of $\mu_{n,k}$ in the intersection is the class dual to the the line $L_{n,k}^y$, where we fix all points in $E^n_{k,k+1}$ at the origin except for one at the $y$-axis. Similarly the class of $\nu_{n,k}$ is the line $L^x_{n,k}$ where we have but one point on the $x$-axis. Under the blowup $\pi_{n,k}: M_{n,k}\to \P^{n-k}\times \P^k$ the class of $L^y_{n,k}$ in $M_{n,k}$ is given by the total transform, which satisfies $[L^y_{n,k}]+\zeta_{n,k}=a_{n,k}$. The computation for the other three cases is nearly identical and we omit it.
\end{proof}

\begin{example}\label{ex:nistwo}
When $n=2$ the Hilbert scheme $C^{[2]}$ has the following components: 
$M_{2,0}\cong M_{2,2}\cong \P^2$ and $M_{2,1}\cong Bl_{pt}(\P^1\times \P^1)$. The intersections are $E^2_{0,1}\cong E^2_{1,2}\cong \P^1$.
The fundamental class of the first intersection is denoted $\mu_{2,0}$ 
and that of the second one is denoted $\mu_{2,1}$. 
Under the inclusion $E^2_{0,1}\hookrightarrow M_{2,0}$, the class $\mu_{2,0}$ is identified with $a_{2,0}$, and under the inclusion $E^2_{0,1}\hookrightarrow M_{2,1}$ with $a_{2,1}-\zeta_{2,1}$.
Similarly, under the inclusion $E^2_{1,2}\hookrightarrow M_{2,1}$ the class $\mu_{2,1}$ is identified with $a_{2,1}-\zeta_{2,1}$, whereas under the inclusion $E^2_{1,2}\hookrightarrow M_{2,2}$ it is identified with $a_{2,2}$.
\end{example}

As follows from the definition of the maps $\iota_i: C^{[n]}\to C^{[n+1]}$, we can consider them as restricted 
to $M_{n,k}$. They induce, by abuse of notation, maps in cohomology $x_i^*: 
H^*(M_{n+1,k+i-1})\to H^*(M_{n,k})$. We can describe these maps explicitly.
\begin{lemma}
In the basis of \eqref{eq:cohomology}, we have
$x_1^*: a^i_{n+1,k}b^j_{n+1,k}\mapsto a^i_{n,k}b_{n,k}^j$ and
$\zeta_{n+1,k}a^i_{n+1,k}b^j_{n+1,k}\mapsto \zeta_{n,k}a^i_{n,k}b_{n,k}^j$. 
Similarly, $x_2^*: a^i_{n+1,k+1}b^j_{n+1,k+1}\mapsto a^i_{n,k}b_{n,k}^j$ and
$\zeta_{n+1,k+1}a^i_{n+1,k+1}b^j_{n+1,k+1}\mapsto \zeta_{n,k}a^i_{n,k}b_{n,k}^j$.
\end{lemma}
\begin{proof}
We are adding one fixed smooth point i.e. embedding $C^{[n]}\hookrightarrow C^{[n+1]}$ as a divisor. Blowing down the components it is immediate that the $a$-classes go to the $a$-classes and the $b$-classes go to the $b$-classes.
We can treat the classes in the exceptional divisor separately,
where everything reduces again to embedding products of projective spaces as above.
In addition, we need that $x_1^*\zeta_{n+1,k+1}=\zeta_{n,k}$, which is saying that the normal bundle of the exceptional divisor of $M_{n+1,k+1}$ restricts to that of the exceptional divisor of $M_{n,k}$ under the embedding $\iota_1: M_{n,k}\to M_{n+1,k+1}$. In the notation of Lemma \ref{lem:irredcomps} we have that the map $\iota_1$ is on $M_{n,k}$ given by multiplying $a(x)$ by $x-c$ for some fixed $c\neq 0$. 
In particular, the centers of the blowups become identified, and the restriction of the normal bundle of the exceptional divisor of $M_{n+1,k+1}$ is the normal bundle of $M_{n,k}$.
\end{proof}

Having the above lemmas at our hands, we want to prove that the intersections of the kernels of the $x_i^*$ are only the fundamental classes.

The basic object of study here is the commutative diagram 
\begin{equation}
\begin{tikzcd}
\bigoplus_{k=0}^n H^{<2n}(M_{n,k}) \arrow[d] & H^{<2n}
(C^{[n]}) \arrow[l] \arrow[d]\\
\bigoplus_{i=0}^{n-1}H^*(M_{n-1,i}) & H^*(C^{[n-1]}) \arrow[l]
\end{tikzcd}
\end{equation}

We can explicitly describe the kernels on the left: for each $M_{n,k}$,
only the classes $\zeta_{n,k}a_{n,k}^{k-1}b_{n,k}^{n-k-1}$ are in their intersection.  In particular the intersection of the kernels is nonempty.
But this can be remedied on the right, as follows. By Lemma \ref{gluing} and the Mayer-Vietoris sequence, inside the intersection we can write for example that
\begin{align*}
  x_2^*(\zeta_{n,k}a_{n,k}^{k-1}b_{n,k}^{n-k-1})  = &
x_2^*(\zeta_{n,k})a_{n-1,k}^{k-1}b_{n-1,k}^{n-k-1} =  x_2^*(a_{n,k}-a_{n,k+1}+\zeta_{n,k+1})a_{n-1,k}^{k-1}b_{n-1,k}^{n-k-1}  \\
= & (a_{n-1,k-1}-a_{n-1,k})a_{n-1,k}^{k-1}b_{n-1,k}^{n-k-1}\neq 0.\end{align*}
In particular, we see that the image of the fundamental class of the exceptional divisor is also nonzero, i.e. it is not in the kernel and $\bigcap_i \ker x_i^*=0$ on the right.

This finishes the proof of the Theorem.

\end{proof}

Having Theorem \ref{thm:fundclasses} at our hands, we can finally restate Theorem \ref{thm:node}:

\begin{theorem}
Consider the following bigraded vector space: let $V''=\Q[x_1,x_2,y_1,y_2]$
with $x_i$ in degree $(1,0)$ and $y_i$ in degree $(1,2)$.
Consider the action of $A'=\Q\langle x_i, \partial_{y_i}, \sum y_i, \sum \partial_{x_i}\rangle$ on this space as differential operators, and let $U$ be the submodule $\Q[x_1,x_2,y_1+y_2](x_1-x_2)$. Define $V'=V''/U$.
Then $V\cong V'$ as $A\cong A'$-modules.
\end{theorem}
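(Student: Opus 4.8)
The plan is to diagonalize $A$ and collapse the problem onto a one-dimensional central computation. On $\A^4$ introduce $u=x_1+x_2$, $v=x_1-x_2$, $s=y_1+y_2$, $w=y_1-y_2$; the relations of Theorem~\ref{thm-hardrelations} then show that $A$ is generated by the multiplications $u,s,v$ together with $\partial_u=\tfrac12\mu_-$, $\partial_s=\tfrac12(d_1+d_2)$ and $\partial_w=\tfrac12(d_1-d_2)$, and that
$$A\cong \Weyl(\A^1_u)\otimes\Weyl(\A^1_s)\otimes\Q[v]\otimes\Q[\partial_w],$$
with $v$ and $\partial_w$ central. First I would record that the Poincar\'e series already match: multiplication by $x_1-x_2$ is injective on the domain $V''$, so $U\cong\Q[x_1,x_2,y_1+y_2]$ with a degree shift $(1,0)$, and
$$P_{V'}=P_{V''}-P_U=\frac{1}{(1-q)^2(1-qt^2)^2}-\frac{q}{(1-q)^2(1-qt^2)}=\frac{1-q+q^2t^2}{(1-q)^2(1-qt^2)^2}=P_V.$$
Because every bigraded piece is finite dimensional, any graded $A$-linear surjection $V'\to V$ is then automatically an isomorphism, so it suffices to construct one.

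The key reduction is a rigidity statement for the two Weyl factors: if $M$ is a $\Z_{\geq 0}$-graded module over $\Weyl(\A^1)=\langle z,\partial_z\rangle$ on which $\partial_z$ strictly lowers the grading, then the multiplication map $\Q[z]\otimes\ker\partial_z\to M$ is an isomorphism of $\Weyl(\A^1)$-modules (the inverse is built by the usual $\tfrac1{k+1}$-integration available in characteristic zero). Applying this successively to $\Weyl(\A^1_u)$ (with $\partial_u$ of degree $(-1,0)$) and $\Weyl(\A^1_s)$ (with $\partial_s$ of degree $(-1,-2)$), for both $V$ and $V'$, yields $A$-module identifications $V\cong\Q[u]\otimes\Q[s]\otimes V^{\mathrm{low}}$ and $V'\cong\Q[u]\otimes\Q[s]\otimes N$, where $V^{\mathrm{low}}=\ker\mu_-\cap\ker(d_1+d_2)$, where $N=\Q[v,w]/(v\,\Q[v])$ is the lowest-weight space of $V'$, and where the central operators $v,\partial_w$ act through the last tensor factor. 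The theorem thus reduces to producing an isomorphism $N\xrightarrow{\ \sim\ }V^{\mathrm{low}}$ of modules over $\Q[v]\otimes\Q[\partial_w]$.

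To build that map I would locate the lowest-weight vectors geometrically. The essential input is the action of the Gysin operators on fundamental classes, namely
$$d_1[M_{m,k}]=[M_{m-1,k-1}],\qquad d_2[M_{m,k}]=[M_{m-1,k}]$$
(with out-of-range classes set to zero), which I would obtain by intersecting $M_{m,k}$ with the divisor $\{Z:c_i\in Z\}$ and removing $c_i$, using the blow-up description of Lemma~\ref{lem:irredcomps}. Since $\mu_-$ kills every top-degree class for degree reasons, $V^{\mathrm{low}}$ in homological degree $2m$ is exactly $\ker(d_1+d_2)$ on $\spann\{[M_{m,k}]\}_{k=0}^{m}$; the formula above makes this kernel one dimensional, spanned by $v_m:=\sum_{k=0}^m(-1)^k[M_{m,k}]$, and gives $\partial_w v_m=-v_{m-1}$. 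I would then set $N\to V^{\mathrm{low}}$, $w^m\mapsto(-1)^m m!\,v_m$, extended $\Q[v]$-linearly; it is $\partial_w$-equivariant by the last relation, and it descends to $N$ precisely because $(x_1-x_2)\cdot v_0=(x_1-x_2)\cdot 1=[c_1]-[c_2]=0$ in $H_0(C^{[1]})=H_0(C)$, the vanishing being the connectedness of the nodal curve. Tensoring with $\Q[u]\otimes\Q[s]$ gives an $A$-linear map $\Psi\colon V'\to V$. Surjectivity is then easy: by the rigidity decomposition each top piece $V_{N,2N}$ is spanned by $\{\mu_+^{c}v_{N-c}\}_{c=0}^{N}\subset\im\Psi$, so $\im\Psi$ contains all fundamental classes $[M_{N,k}]$, whence $\im\Psi\supseteq\Q[x_1,x_2]\cdot\{[M_{N,k}]\}=V$ by Theorem~\ref{thm:fundclasses}. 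Comparing Poincar\'e series concludes.

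I expect the main obstacle to be the geometric computation of $d_i[M_{m,k}]$ on the reducible and singular Hilbert scheme $C^{[m]}$: the refined intersection $\iota_i^![M_{m,k}]$ must be controlled along the boundary strata where the components $M_{m,k}$ meet, and one must check transversality and the vanishing in the extremal cases $k=0$ and $k=m$. This is the homological mirror of the cohomological maps $x_i^\ast$ already determined in the proof of Theorem~\ref{thm:fundclasses}, and I would carry it out one smooth component at a time with the help of Lemma~\ref{gluing}. A secondary point is to verify the hypotheses of the rigidity statement, i.e. that $\mu_-$ and $d_1+d_2$ really do strictly lower the ($\Z_{\geq 0}$-bounded) $n$-grading and hence act locally nilpotently; this is immediate from the bidegrees in Theorem~\ref{thm:mainthm}.
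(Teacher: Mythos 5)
Your argument is correct, and it reaches the theorem by a genuinely different route from the paper's. The paper works directly with the spanning set of fundamental classes: it records the geometric fact $d_1[M_{i+j,i}]=[M_{i+j-1,i-1}]$, $d_2[M_{i+j,i}]=[M_{i+j-1,i}]$, and then pins down the action of $\mu_+$ on the top row purely from the commutation relations (via $[d_1^{i+1},\mu_+]=(i+1)d_1^i$ applied to $y_1^iy_2^j/i!j!$), so that no geometric computation of the Nakajima correspondence on fundamental classes is ever needed; the map $\Q[x_1,x_2,y_1,y_2]\twoheadrightarrow V$ then comes from Theorem \ref{thm:fundclasses}, its kernel contains $U$ because $(x_1-x_2)\cdot 1=[c_1]-[c_2]=0$, and the Poincar\'e series comparison finishes. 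You instead split $A$ into two rank-one Weyl factors plus the central $\Q[v]\otimes\Q[\partial_w]$, invoke the characteristic-zero structure theorem for locally nilpotent $\partial$-actions twice, and localize all the geometry into the doubly-lowest-weight space, identified via the alternating sums $v_m=\sum_k(-1)^k[M_{m,k}]$. Both proofs ultimately rest on the same three inputs (the Gysin action on fundamental classes, Theorem \ref{thm:fundclasses}, and the Poincar\'e series), and your closing surjectivity-plus-dimension-count step coincides with the paper's. What your route buys is a structural explanation of the freeness of $V$ over $\Q[x_1+x_2]$ and $\Q[y_1+y_2]$ and it avoids having to determine $\mu_\pm$ on fundamental classes; what it costs is the rigidity lemma and some care with the lowest-weight space $N=\Q[v,w]/v\Q[v]$, which is not concentrated on the diagonal (it contains the classes $v^aw^b$ with $a,b\geq 1$) — you handle this correctly by extending $\Q[v]$-linearly and letting the final dimension count do the work. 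The one point where you are no more rigorous than the paper is the computation $d_i[M_{m,k}]$ along the boundary strata, which you rightly flag as the main geometric obstacle; the paper asserts it with comparable brevity.
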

\begin{proof}
That $A'$ is isomorphic to $A$ in this case follows from the commutation relations. We can identify $V'$ and $V$ as $A$-modules as follows: let the monomial $y_1^iy_2^j/i!j!$ correspond to the fundamental class of $M_{i+j,i}$. It is then clear that on the diagonal $\bigoplus_{n\geq 0} V_{n,2n}$  the operators $d_1, d_2, \mu_+, \mu_-$ act as the corresponding differential operators in $A'$. Namely, the Gysin maps $d_1, d_2$ are given by intersection, from which it follows that $d_1 [M_{i+j,i}]=[M_{i+j-1,i-1}]$ and 
$d_2[M_{i+j,i}]=[M_{i+j-1,i}]$. This can be compared to the fact that for example $\partial_{y_1}y_1^iy_2^j/i!j!=y_1^{i-1}y_2^j/(i-1)!j!$.

By the commutation relations, $[d_1^{i+1},\mu_+]=(i+1)d_1^i$, so
$$[d_1^{i+1},\mu_+]y_1^iy_2^j/i!j!=(i+1)y_2^j/j!, \; 
[d_2^{j+1},\mu_+]y_1^iy_2^j/i!j!=(j+1)y_1^i/i!$$ and in particular $d_1^{i+1}\mu_+y_1^iy_2^j/i!j!=(i+1)y_2^j/j!$, $d_2^{j+1}\mu_+y_1^iy_2^j/i!j!=(j+1)y_1^i/i!$. Since $\mu_+y_1^iy_2^j/i!j!=
\sum_{k=0}^{i+j+1} c_ky_1^ky_2^{i+j+1-k}$ for some constants $c_k$, we must have $c_k=0$ unless $k=i$ or $k=i+1$, in which case we have $c_k=1/i!j!$.
This shows that $\mu_+$ can be identified with multiplication by $y_1+y_2$ on the diagonal, and below the diagonal since it commutes with the action of $x_1, x_2$. 
The operator $\mu_-$ acts on the diagonal as zero by degree reasons. An argument similar to the above shows that $\mu_-$ acts below the diagonal by $\partial_{x_1}+\partial_{x_2}$.

Since the maps $x_i$ are jointly surjective on the rows by Theorem \ref{thm:fundclasses}, we get a surjection 
$\phi: \Q[x_1,x_2,y_1,y_2]\twoheadrightarrow V.$ 
This is an $A$-module homomorphism by above. Its kernel contains $U$,
since $(x_1-x_2)\cdot 1=0$ and the actions of $x_i$ and $\mu_+$ commute with the $x_i$.

Consider then the graded dimensions/Poincar\'e series of $V'$ and $V$. 
We have $$P_{V'}(q,t)=P_{V''}(q,t)-P_U(q,t)=\frac{1}{(1-q)^2(1-qt^2)}-
\frac{q(1-qt^2)}{(1-q)^2(1-qt^2)^2}=P_V(q,t),$$ and since $\ker \phi \supseteq U$, we must have $\ker \phi=U$.
\end{proof}

\end{document}